\newtheorem{theorem}{Theorem}[section]
\newtheorem{proposition}[theorem]{Proposition}
\newtheorem*{proposition*}{Proposition}
\theoremstyle{definition}
\newtheorem{definition}[theorem]{Definition}
\newtheorem{remark}[theorem]{Remark}
\newcommand{\eps}{\varepsilon}
\newcommand{\Aut}{\operatorname{Aut}}
\newcommand{\Alt}{\operatorname{Alt}}
\newcommand{\PSL}{\operatorname{PSL}}
\newcommand{\Sym}{\operatorname{Sym}}
\renewcommand{\le}{\leqslant}
\newcommand{\id}{\operatorname{id}}
\newcommand{\fix}{\operatorname{fix}}
\newcommand{\Out}{\operatorname{Out}}
\newcommand{\Gal}{\operatorname{Gal}}
\newcommand{\GL}{\operatorname{GL}}
\newcommand{\PSp}{\operatorname{PSp}}
\newcommand{\im}{\operatorname{im}}
\newcommand{\Mod}[1]{\ (\textup{mod}\ #1)}
\renewcommand{\P}{\mathfrak{P}}
\newcommand{\F}{\mathbb{F}}
\newcommand{\conj}{\operatorname{conj}}
\newcommand{\IN}{\mathbb{N}}
\newcommand{\Sp}{\operatorname{Sp}}
\newcommand{\IF}{\mathbb{F}}
\newcommand{\Stab}{\operatorname{Stab}}
\newcommand{\Hol}{\operatorname{Hol}}
\newcommand{\IZ}{\mathbb{Z}}
\newcommand{\SL}{\operatorname{SL}}
\newcommand{\Fcal}{\mathcal{F}}
\newcommand{\GammaL}{\Gamma{\rm L}}
\newcommand{\tr}{\mathrm{tr}}
\renewcommand{\b}{\mathfrak{b}}
\newcommand{\Q}{\operatorname{Q}}
\newcommand{\Z}{\operatorname{Z}}
\newcommand{\GammaSp}{\Gamma{\rm Sp}}
\newcommand\scalemath[2]{\scalebox{#1}{\mbox{\ensuremath{\displaystyle #2}}}}
\newcommand{\coloneq}{\vcentcolon=}
\newcommand{\eqcolon}{=\vcentcolon}
\begin{document}

\title{Finite $2$-groups with exactly three\\ automorphism orbits}

 \author[1]{Alexander Bors\footnote{Supported by the Austrian Science Fund (FWF), project J4072-N32 ``Affine maps on finite groups''}}
 \affil[1]{\small Johannes Kepler University (Linz)
   \href{mailto:Alexander.Bors@gmx.at}{Alexander.Bors@gmx.at}
 	}

 \author[2]{Stephen P. Glasby\footnote{Supported by the Australian Research Council (ARC) Discovery Project DP190100450}}
 \affil[2]{\small Center for the Mathematics of Symmetry and Computation,
   University of Western Australia, Boorloo Perth 6009, Australia.
   \href{mailto:Stephen.Glasby@uwa.edu.au}{Stephen.Glasby@uwa.edu.au}
 	}

\date{\today}

\maketitle

\abstract{We give a complete classification of the finite $2$-groups $G$ for
  which the automorphism group $\Aut(G)$ acting naturally on $G$ has three
  orbits. There are two infinite families and one additional group, of
  order $2^9$. All of them are Suzuki $2$-groups, and they appear (in a
  different context) in an earlier classification of Dornhoff.
}

\section{Introduction}\label{sec1}

\subsection{Motivation and main result}\label{subsec1P1}

The automorphism group $\Aut(A)$ of an algebraic object $A$ acts on $A$, and
the objects $A$ with `fewest possible' $\Aut(A)$-orbits are of great
mathematical interest; being `maximally homogeneous', in some sense.
We call $A$ a \emph{$k$-orbit} object if $\Aut(A)$ has~$k$ orbits on~$A$.
There has been much work classifying $k$-orbit objects for small~$k$.
For example, a 1-orbit group is trivial, a finite 2-orbit group is elementary
abelian, and the finite 3-orbit groups $G$ with $\Aut(G)$ solvable have been
classified by Dornhoff~\cite{Dornhoff}. The purpose of this paper is
to classify all finite 3-orbit 2-groups.

A more general problem is to classify the
permutation subgroups of the symmetric group $\Sym(X)$
with precisely $k$ orbits on~$X$. Clearly $G$ is a $k$-orbit group
precisely when $\Aut(G)\le\Sym(G)$ has $k$ orbits on $X=G$.
The groups that act transitively (and faithfully) on the
set $X=V\setminus\{0\}$ of non-zero vectors of a finite vector
space $V=(\F_q)^n$ have been classified. Huppert
classified the solvable groups with this property, and Hering~\cite{Her85a}
dealt with the non-solvable groups: for a clear statement and proof see~\cite[Appendix~1]{Lie87a}.

The most difficult case of classifying permutation subgroups
of $\Sym(X)$ with $k$ orbits is inevitably classifying the solvable ones.
For example, the solvable subgroups of $\GL_n(\F_q)\rtimes\Aut(\F_q)$ that
act transitively on the non-zero
vectors $(\F_q)^n\setminus\{0\}$ are subgroups of $\GammaL_1(\F_{q^n})$ or known
small examples with $q$ odd,
and classifying the former leads to unsolved problems in
geometry~\cite{Foulser69}. Before
saying more, we recall that the \emph{rank} of a permutation
group $G\le\Sym(X)$ is the number of $G$-orbits on $X\times X$. The holomorph of $G$, namely
$\Hol(G)=\Aut(G)\ltimes G$, acts faithfully on $X=G$ via $x^{\alpha g}=x^{\alpha} g$.
Further, the stabiliser $\Hol(G)_1$ of the identity element $1\in G$
is $\Aut(G)$ (as $1^{\alpha g}=1\Leftrightarrow g=1$) and $G$ acts regularly on $X=G$.
Importantly, $G$ is a $k$-orbit group precisely when $\Hol(G)\le\textup{Sym}(G)$ has rank $k$, and $\Hol(G)$ is imprimitive if $G$ is not characteristically simple
because the cosets of a characteristic subgroup $H$ of $G$ give
a system of imprimitivity, as $(Hx)^{\alpha g}=Hx^{\alpha}g$ for
all $\alpha g\in\Aut(G)\ltimes G$.

The finite permutation groups of rank 2 (also called \emph{2-transitive groups})
have been classified subject to the difficult problem of determining
the (solvable) rank-2 subgroups of
$A=\F_{q^n}\rtimes\GammaL_1(\F_{q^n})\le\Sym(\F_{q^n})$.
All subgroups of $A$ containing the
subgroup $N=\F_{q^n}\rtimes\F_{q^n}^\times={\rm AGL}_1(\F_{q^n})$ are 2-transitive; however, some subgroups not containing $N$
are also 2-transitive.
A rank-3 permutation group is transitive and can be imprimitive or primitive.
The solvable imprimitive rank-3 groups are classified in~\cite{Dornhoff,DMSSY}, 
the quasi-primitive rank-3 groups are classified in~\cite{AGLPP}, and
the innately transitive rank-3 groups in~\cite{BDP}.

Progress classifying the $k$-orbit $p$-groups (groups of
prime-power order) has been slow because:
(1) the automorphism groups of a finite $p$-groups 
are poorly understood and vary considerably, and
(2) there are  a huge number of $p$-groups of order $p^n$ when $n$ is large.
A major problem is the classification of finite 3-orbit $p$-groups.
The abelian ones are the homocyclic groups $(C_{p^2})^n$, and if $p$ is odd,
these are the only 3-orbit $p$-groups of exponent $p^2$ by Shult's result~\cite[Corollary~3]{Shu69a}.
Our preprint~\cite[Theorem~1.1]{BG} classifies all finite $3$-orbit
$2$-groups. These turn out to be the $2$-groups in Dornhoff's
list~\cite[Theorem (iv),\,(v),\,(viii)]{Dornhoff} as we prove that
$\Aut(G)$ must be solvable, see Theorem~\ref{normalizeTheo} and Proposition~\ref{normalizeProp} below. This paper is a paired-down version of~\cite{BG}.

Many authors have studied finite groups $G$ with the
property that elements $g,h\in G$ of the same order are `conjugate' or `fused'
in $\Aut(G)$.
These groups are partially classified in~\cite{LP}: the $2$-groups with
this property were not classified.
We were motivated to classify finite 3-orbit 2-groups precisely because
this is difficult.

In a recent beautiful paper, \cite{LZ}, Li and Zhu solve
a problem which is more general than ours:
classifying the $p$-groups $G$ for which $\Aut(G)$ is transitive on the set
of elements of order $p$. Shult's \cite[Corollary 3]{Shu69a} solves the
case $p>2$. The authors were unaware of our work, and
show in \cite[Theorem~1.1]{LZ} that the only examples are the homocyclic groups
$(C_{p^2})^n$, $p$ odd, by~\cite[Corollary~3]{Shu69a} and the 2-group
examples in~\cite[Theorem 1.1]{BG}. Thus new examples do not arise.
Sadly, when writing~\cite{BG} we were unaware of Dornhoff's
classification~\cite{Dornhoff}, so after we proved
that $\Aut(G)$ is solvable for
a finite 3-orbit 2-group $G$, we devoted considerable effort to classify~$G$.
In this paper we use Dornhoff's classification to appreciably shorten our proof.
Dornhoff's classification used the Lie ring techniques~\cite[p.\,698]{Dornhoff},
while our (lengthy) approach was elementary, just using the squaring map of $G$.
Of course, squares determine the commutators in any group $G$
because $g^{-2}(gh^{-1})^2h^2=g^{-1}h^{-1}gh=[g,h]$ for $g,h\in G$.

The proof of~\cite[Theorem~1.1]{LZ} uses an interesting technical
result~\cite[Theorem~1.2]{LZ} which we paraphrase:
Let $V=(\F_2)^n$ and let $G\le\GL(V)$ be non-solvable and act transitively on
$V\setminus\{0\}$. If there is a $G$-submodule $W$ of $\Lambda^2_{\F_2}(V)$
of co-dimension $n$, and $G$ is transitive on the non-zero
vectors of $\Lambda^2_{\F_2}(V)/W$, then the solvable residual $G^\infty$ of $G$
equals $\SL_3(2^{n/3})<\GL_n(2)$ where $3\mid n$ and $\Lambda^2_{\F_2}(V)/W$ is
the dual of the $\F_2 G^\infty$-module~$V$. Thus the techniques in~\cite{LZ}
and~\cite{BG} differ widely.

Laffey and MacHale \cite{LM86a} classified the
$3$-orbit groups that are not $p$-groups and gave structural information
for solvable $4$-orbit groups that are not $p$-groups~\cite[Theorem~2]{LM86a}.
M{\"a}urer and Stroppel gave examples of (sometimes infinite) 3-orbit groups in
\cite{MS97a}, and gave structural restrictions for such groups.
Two of the infinite families of finite 3-orbit
2-groups have infinite analogues, see~\cite[Examples 5.3]{MS97a}. They
omitted the example of order $2^9$ in~\cite[Theorem~(viii)]{Dornhoff}.
More recently (non-solvable) $k$-orbit groups with $k\in\{4,5,6\}$ have been
classified in~\cite[Theorem~3]{LM86a}, \cite[Theorem A]{BD16a}, \cite[Theorem 1]{DGB17a}.
Classifying infinite $2$-orbit
groups seems impossible; however, locally compact $k$-orbit groups,
$k\in\{2,3\}$, are classified in~\cite[Theorems 2.13 and 6.6]{Str23}.

The following result, \emph{cf.}~\cite[Proposition\;3.1]{BG}, is used to prove
Theorem~\ref{mainThm} below.

\begin{theorem}\label{normalizeTheo}
Let $G$ be a finite $2$-group such that the natural action of $\Aut(G)$ on $G$ has exactly three orbits. Then $\Aut(G)$ is solvable.
\end{theorem}

This allows us to deduce the following result from
\cite[Theorem]{Dornhoff}, which was stated in our preprint as
\cite[Theorem~1.1]{BG}. We use Higman's notations for the different
classes of Suzuki $2$-groups from \cite[Table, p.~81]{Hig63a}. To make
the formulation concise, we subsume the homocyclic groups $(C_4)^n$
and the quaternion group $\Q_8$, which are customarily not considered
to be Suzuki $2$-groups, in Higman's notations $A(n,\theta)$ and
$B(n,\theta,\varepsilon)$ by observing that $A(n,\id)\cong(C_4)^n$
(see also \cite[p.~89]{Hig63a}) and $B(1,\id,1)\cong\Q_8$, and we will
call them \enquote{improper Suzuki $2$-groups}.
Higman essentially had $n>1$.

\begin{theorem}\label{mainThm}
  Suppose that $G$ is a finite $2$-group and $\Aut(G)$ acting naturally
  on $G$ admits exactly three orbits. Then $G$ is isomorphic to one of the
  following:
\begin{enumerate}[(a)]
\item a (possibly improper) Suzuki $2$-group $A(n,\theta)$, of order $2^{2n}$, with $n\in\IN^+$, and $\theta$ an odd order automorphism of the finite field $\IF_{2^n}$, such that either $\theta=\id$, or $n$ is not a power of $2$ (in particular, $n\ge3$) and $\theta$ is nontrivial;
\item a (possibly improper) Suzuki $2$-group $B(n,\id,\mu+\mu^{-1})$, of order $2^{3n}$, with $n\in\IN^+$ and $\mu\in\IF_{2^{2n}}^{\times}$ of order $2^n+1$;
\item the (proper) Suzuki $2$-group $B(3,\theta,\varepsilon)$, of order $2^9$, where $\theta$ is any nontrivial automorphism of $\IF_{2^3}$ and
  $\varepsilon\in\{\eta,\eta^2,\eta^4\}$ where $\eta^3+\eta+1=0$.
\end{enumerate}
\end{theorem}

\subsection{Overview of the proofs of
  Theorems~\texorpdfstring{\ref{normalizeTheo}}{} and~\texorpdfstring{\ref{mainThm}}{}}\label{subsec1P3}
  
The proof of Theorem~\ref{normalizeTheo} has three steps, described
in Sections~\ref{sec2}, \ref{sec3}, \ref{sec4}.

\begin{enumerate}
\item A 3-orbit 2-group $G$ has  $\Omega_1(G)=\mho_1(G)\ne1$ where $\Omega_1(G)=\langle g\in G\mid g^2=1\rangle$ and $\mho_1(G)=\langle g^2\mid g\in G\rangle$.
  Hence the \emph{abelian} $3$-orbit $2$-groups are just the homocyclic groups $(C_4)^n$ for $n\in\IN^+$, so we will henceforth focus on finite \emph{nonabelian} $3$-orbit $2$-groups.
  Each such group $G$ has exponent $4$ with $\Z(G)=G'=\Phi(G)$ an elementary abelian $2$-group, say of $\IF_2$-dimension $n$, and $G/\Phi(G)$ another elementary abelian $2$-group, say of $\IF_2$-dimension $m$. The concatenation, of the lift to $G$ of an $\IF_2$-basis $\b_1$ of $G/\Phi(G)$, with an $\IF_2$-basis $\b_2$ of $\Phi(G)$ is a polycyclic generating sequence $\b$ of $G$ and thus associated with some (refined) consistent polycyclic presentation $P_{\b}$ of $G$ (see \cite[Section 8.1]{HEO05a} for more details). As already observed by Higman in \cite{Hig63a}, the entire presentation $P_{\b}$, and thus the group $G$, can be reconstructed from just the so-called \enquote{squaring} $\sigma_{\b}\colon\IF_2^m\rightarrow\IF_2^n$, which encodes the actual squaring function $x\mapsto x^2$ on $G$ by mapping the $\b_1$-coordinate representation of each $g\Phi(G)\in G/\Phi(G)$ to the $\b_2$-coordinate representation of the common square $g^2\in\Phi(G)$ of the elements of the coset $g\Phi(G)$. One can give conditions that characterize when a function $\sigma\colon\IF_2^m\rightarrow\IF_2^n$ is of the form $\sigma_{\b}$ for some polycyclic generating sequence $\b$ of the above described form of some nonabelian $3$-orbit group of exponent $4$. This reduces the classification of such groups to the classification of the functions $\sigma\colon\IF_2^m\rightarrow\IF_2^n$ satisfying these conditions, see Section~\ref{sec2}.

\item Let $G$ be a 3-orbit 2-groups and let $A,B$ the the groups of
  automorphisms induced on $G/\Phi(G)$ and $\Phi(G)$, respectively.
  Then $A, B$ are transitive on $\IF_2^m\setminus\{0\}$ and $\IF_2^n\setminus\{0\}$.
  One of the conditions whose conjunction characterizes when
  $\sigma\colon\IF_2^m\rightarrow\IF_2^n$ is a squaring (i.e., represents a
  nonabelian $3$-orbit $2$-group) is that there must exist
  subgroups $A\leq\GL_m(2)$ and $B\leq\GL_n(2)$ and an epimorphism
  $\varphi\colon A\twoheadrightarrow B$ where $A, B$ both act transitively
  on $\IF_2^m\setminus\{0\}$ and $\IF_2^n\setminus\{0\}$
  and such that
  $\sigma(x^g)=\sigma(x)^{\varphi(g)}$ for all $g\in A$ and all
  $x\in\IF_2^m$. The finite linear groups acting transitively on
  nonzero vectors are listed in~\cite[Appendix 1]{Lie87a}. We use this
  classification to restrict the possibilities for $A$ and $B$ and,
  ultimately, $\varphi$ and $\sigma$, as we did in our preprint
  \cite{BG}. This is done in Section~\ref{sec3}. Through a careful
  case analysis, we prove that $A$ must be solvable
  (Proposition~\ref{normalizeProp}). A simple additional argument
  shows that this implies the statement of
  Theorem~\ref{normalizeTheo}.

\item At this point, one can apply Dornhoff's classification \cite[Theorem]{Dornhoff} and conclude that the finite $3$-orbit $2$-groups are precisely the $2$-groups in Dornhoff's list. In Section~\ref{sec4}, we take a close look at those groups to see that all of them are Suzuki $2$-groups, proving Theorem~\ref{mainThm} in the process.

\end{enumerate}

\section{\texorpdfstring{$3$}{3}-orbit \texorpdfstring{$2$}{2}-groups to squarings~data and back}\label{sec2}

We will need the following facts:

\begin{proposition}\label{4presProp}
Let $m,n\in\IN^+$, and let $w_1\colon\IF_2^m\rightarrow\F(x_1,\ldots,x_m)$, $(\delta_1,\ldots,\delta_m)\mapsto x_1^{\delta_1}\cdots x_m^{\delta_m}$, and $w_2\colon\IF_2^n\rightarrow\F(y_1,\ldots,y_n)$,$(\varepsilon_1,\ldots,\varepsilon_n)\mapsto y_1^{\varepsilon_1}\cdots y_n^{\varepsilon_n}$. Moreover, let $\sigma_i\in\IF_2^n$ for $i=1,\ldots,m$ and $\pi_{i,j}\in\IF_2^n$ for $1\leq i<j\leq m$. Then:
\begin{enumerate}
\item The following power-commutator presentation $\P$ is consistent:
\begin{align*}
\P=\langle x_1,\ldots,x_m,y_1,\ldots,y_n\mid &x_i^2=w_2(\sigma_i)\text{ for }i=1,\ldots,m; \\
&[x_i,x_j]=w_2(\pi_{i,j})\text{ for }1\leq i<j\leq m; \\
&[x_i,y_k]=1\text{ for }i=1,\ldots,m\text{ and }k=1,\ldots,n; \\
&y_k^2=1\text{ for }k=1,\ldots,n;\\
&[y_k,y_l]=1\text{ for }1\leq k<l\leq n \rangle.
\end{align*}
\end{enumerate}
In what follows, the finite group represented by $\P$ is denoted by $G_{\P}$.
\begin{enumerate}\setcounter{enumi}{1}
\item Associated to $\P$, one has well-defined functions $\sigma_{\P}\colon\IF_2^m\rightarrow\IF_2^n$ with
\[
w_2(\sigma_{\P}(u))_{G_{\P}}=w_1(u)_{G_{\P}}^2
\]
for all $u\in\IF_2^m$ and $[\,,]_{\P}\colon\IF_2^m\times\IF_2^m\rightarrow\IF_2^n$ with
\[
w_2([u_1,u_2]_{\P})_G=[w_1(u_1)_G,w_1(u_2)_G]
\]
for all $u_1,u_2\in\IF_2^m$. The function $[\,,]_{\P}$ is biadditive and alternating, and
\[
  [u_1,u_2]_{\P}=\sigma_{\P}(u_1+u_2)+\sigma_{\P}(u_1)+\sigma_{\P}(u_2)
  \quad\textup{for all $u_1,u_2\in\IF_2^m$.}
\]
\item Conversely, for every pair $(\sigma;[\,,])$ such that $\sigma$ is a function $\IF_2^m\rightarrow\IF_2^n$, and $[\,,]\colon\IF_2^m\times\IF_2^m\rightarrow\IF_2^n$ is biadditive and alternating and satisfies
\[
[u_1,u_2]=\sigma(u_1+u_2)+\sigma(u_1)+\sigma(u_2)\quad\textup{for all $u_1,u_2\in\IF_2^m$,}
\]
there is a (consistent) power-commutator presentation $\P'$ as above such that $\sigma=\sigma_{\P'}$ and $[\,,]=[\,,]_{\P'}$.
\item For all $v_1,\ldots,v_m\in\IF_2^n$, the following map defined on the generators of $G_{\P}$ extends to an automorphism of $G_{\P}$: $x_i\mapsto x_iw_2(v_i)$ for $i=1,\ldots,m$, and $y_k\mapsto y_k$ for $k=1,\ldots,n$. These automorphisms form a subgroup, denoted $C_{\P}$, of the group $\Aut_c(G_{\P})$ of central automorphisms of $G_{\P}$.
\end{enumerate}
For the following final two statements, assume additionally that $[\,,]_{\P}$ (and thus also $\sigma_{\P}$) is surjective onto $\IF_2^n$.
\begin{enumerate}\setcounter{enumi}{4}
\item The map $\gamma\colon\Aut(G_{\P})\rightarrow\GL_m(2)\times\GL_n(2)$, $\alpha\mapsto(\gamma_1(\alpha),\gamma_2(\alpha))$ where $\gamma_1,\gamma_2$ are well-defined through
\begin{itemize}
\item $w_1(u^{\gamma_1(\alpha)})_{G_{\P}}\equiv w_1(v)_{G_{\P}}^{\alpha}\Mod{\langle y_1,\ldots,y_n\rangle_{G_{\P}}}$ for all $u\in\IF_2^m$, and
\item $w_2(v^{\gamma_2(\alpha)})_{G_{\P}}=w_2(v)_{G_{\P}}^{\alpha}$ for all $v\in\IF_2^n$,
\end{itemize}
is a group homomorphism with kernel $C_{\P}$ whose image is the subgroup $S_{\P}$ of $\GL_m(2)\times\GL_n(2)$ comprising the matrix pairs $(\psi_1,\psi_2)$ with $\sigma_{\P}\circ\psi_1=\psi_2\circ\sigma_{\P}$.
\item Denote by $A_{\P}$ resp.~$B_{\P}$ the projection of $S_{\P}$ onto the first resp.~second coordinate (so $A_{\P}\leq\GL_m(2)$ and $B_{\P}\leq\GL_n(2)$). Then for each $a\in A_{\P}$, there is a unique $b\in B_{\P}$ such that $(a,b)\in S_{\P}$, and the corresponding assignment $a\mapsto b$ is an epimorphism $\varphi_{\P}\colon A_{\P}\twoheadrightarrow B_{\P}$.
\end{enumerate}
\end{proposition}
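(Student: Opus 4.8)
The plan is to read everything off from the explicit description of $S_{\P}$ supplied by part (5): it is the set of pairs $(\psi_1,\psi_2)\in\GL_m(2)\times\GL_n(2)$ with $\sigma_{\P}\circ\psi_1=\psi_2\circ\sigma_{\P}$, and it is moreover a \emph{subgroup}, being the image of the homomorphism $\gamma$. Since $A_{\P}$ and $B_{\P}$ are the coordinate projections of the subgroup $S_{\P}$, both are subgroups, and the projection onto the second coordinate is tautologically surjective; this will yield surjectivity of $\varphi_{\P}$ for free once $\varphi_{\P}$ is known to be well-defined, because any $b\in B_{\P}$ is the second coordinate of some $(a,b)\in S_{\P}$, forcing $\varphi_{\P}(a)=b$.

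The crux is the uniqueness assertion, which is precisely what upgrades the relation $\{(a,b)\in S_{\P}\}$ to a genuine function $a\mapsto b$. First I would record that the hypothesis makes $\im(\sigma_{\P})$ span $\IF_2^n$ over $\IF_2$: this is immediate since $\sigma_{\P}$ is assumed onto $\IF_2^n$, so $\im(\sigma_{\P})=\IF_2^n$ outright (the weaker assumption that $[,]_{\P}$ is surjective would already suffice, via the identity $[u_1,u_2]_{\P}=\sigma_{\P}(u_1+u_2)+\sigma_{\P}(u_1)+\sigma_{\P}(u_2)$ from part (2), which places $\im([,]_{\P})$ inside $\Span(\im(\sigma_{\P}))$). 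Now suppose $(a,b_1),(a,b_2)\in S_{\P}$ share the same first coordinate. Then $b_1\circ\sigma_{\P}=\sigma_{\P}\circ a=b_2\circ\sigma_{\P}$, so $b_1$ and $b_2$ agree on $\im(\sigma_{\P})$; being $\IF_2$-linear and agreeing on a spanning set, they coincide on all of $\IF_2^n$, i.e. $b_1=b_2$. I expect this to be the only real obstacle, and the surjectivity hypothesis is used exactly here; the remaining steps are formal.

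With uniqueness established, for each $a\in A_{\P}$ there is, by definition of the projection, at least one and, by the above, exactly one $b\in B_{\P}$ with $(a,b)\in S_{\P}$, so $\varphi_{\P}\colon A_{\P}\to B_{\P}$, $a\mapsto b$, is a well-defined function, and it is surjective as noted. Finally, for the homomorphism property I would use that $S_{\P}$ is closed under multiplication: if $\varphi_{\P}(a_1)=b_1$ and $\varphi_{\P}(a_2)=b_2$, then $(a_1,b_1),(a_2,b_2)\in S_{\P}$, hence $(a_1a_2,b_1b_2)\in S_{\P}$, so $a_1a_2\in A_{\P}$ and uniqueness for the first coordinate $a_1a_2$ forces $\varphi_{\P}(a_1a_2)=b_1b_2=\varphi_{\P}(a_1)\varphi_{\P}(a_2)$. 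This completes the argument.
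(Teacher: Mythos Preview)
Your argument is correct. Both you and the paper reduce the claim to showing that $(a,b)\in S_{\P}$ determines $b$ uniquely from $a$, equivalently that $(1,b)\in S_{\P}$ forces $b=1$, and then read off the homomorphism property and surjectivity from the fact that $S_{\P}$ is a subgroup with second projection $B_{\P}$.

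The one genuine difference is in how uniqueness is established. You stay at the linear-algebra level: from $b\circ\sigma_{\P}=\sigma_{\P}\circ 1=\sigma_{\P}$ and surjectivity of $\sigma_{\P}$ you conclude $b=\id$. The paper instead lifts $(1,b)$ back to an automorphism $\alpha\in\Aut(G_{\P})$ via part~(5), notes that $\gamma_1(\alpha)=1$ means $\alpha$ acts trivially modulo $G_{\P}'=\zeta G_{\P}$, and then uses that any automorphism trivial modulo the centre fixes all commutators, hence is the identity on $G_{\P}'$, giving $b=\gamma_2(\alpha)=1$. Your route is slightly more self-contained, never leaving the description of $S_{\P}$ furnished by~(5); the paper's route is more group-theoretic, exploiting the structural equality $G'=\zeta G$ rather than the surjectivity of $\sigma_{\P}$ directly. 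Either way the remaining steps are identical.
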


These six facts are easily verified. Some were used (without proof) by Higman to classify the Suzuki $2$-groups, see \cite[beginning of Section 2, p.~80]{Hig63a}. As the authors are unaware of published proofs of facts 1--6, we will, for the sake of completeness, prove Proposition~\ref{4presProp} in the Appendix, see Subsection~\ref{app1}. Proposition~\ref{4presProp} will allow us to establish a bijective correspondence between nonabelian $3$-orbit $2$-groups and equivalence classes of squarings resp.~maximal data, defined as follows:

\begin{definition}\label{datumDef}
Let $m,n\in\IN^+$, $\sigma\colon\IF_2^m\rightarrow\IF_2^n$.

\begin{enumerate}
\item We denote by $[\,,]_{\sigma}$ the function $\IF_2^m\times\IF_2^m\rightarrow\IF_2^n$, $(x,y)\mapsto[x,y]_{\sigma}\coloneq\sigma(x+y)+\sigma(x)+\sigma(y)$.
\item An \emph{equivariance witness for $\sigma$} is an epimorphism $\varphi\colon A\twoheadrightarrow B$ with $A\leq\GL_m(2)$ acting transitively on $\IF_2^m\setminus\{0\}$, $B\leq\GL_n(2)$ acting transitively on $\IF_2^n\setminus\{0\}$, and such that $\sigma(x^g)=\sigma(x)^{\varphi(g)}$ for all $(x,g)\in \IF_2\times A$.
\item $\sigma$ is called a \emph{squaring} if and only if it has an equivariance witness and the function $[\,,]_{\sigma}$ is biadditive and nontrivial (i.e., not constantly $0$).
\item A \emph{datum} is a pair $(\sigma',\varphi)$ where $\sigma'$ is a squaring and $\varphi$ is an equivariance witness for $\sigma'$.
\item If $\delta_1\coloneq(\sigma_1,\varphi_1\colon A_1\twoheadrightarrow B_1)$ and $\delta_2\coloneq(\sigma_2,\varphi_2\colon A_2\twoheadrightarrow B_2)$ are data, say with $A_i\leq\GL_{m_i}(2)$ and $B_i\leq\GL_{n_i}(2)$ for $i=1,2$, then we say that \emph{$\delta_2$ extends $\delta_1$}, or that \emph{$\delta_1$ is a restriction of $\delta_2$}, denoted $\delta_1\leq\delta_2$, if and only if $m_1=m_2$, $n_1=n_2$, $\sigma_1=\sigma_2$, $A_1\leq A_2$ and $\varphi_1=(\varphi_2)_{\mid A_1}$. It is easy to check that this is a partial ordering on the class of data.
\item A datum is called \emph{maximal} if and only if it does not admit a proper extension.
\end{enumerate}
\end{definition}

\begin{remark}\label{datumRem}
We note the following simple facts on squarings and data:
\begin{enumerate}
\item If $\varphi\colon A\twoheadrightarrow B$ is an equivariance witness for the squaring $\sigma\colon\IF_2^m\rightarrow\IF_2^n$, then for all $g\in A$ and all $x,y\in\IF_2^m$, we have $[x^g,y^g]_{\sigma}=[x,y]_{\sigma}^{\varphi(g)}$.
\item Every squaring $\sigma$ satisfies $\sigma(0)=0$. Indeed, by the bidadditivity of $[\,,]_{\sigma}$, we have $[0,0]_{\sigma}=0$, and so $\sigma(0)=\sigma(0+0)=\sigma(0)+\sigma(0)+[0,0]_{\sigma}=0$.
\item Consequently, for every squaring $\sigma\colon\IF_2^m\rightarrow\IF_2^n$, the function $[\,,]_{\sigma}$ is alternating, i.e., $[x,x]_{\sigma}=0$ for all $x\in\IF_2^m$.
\item If $\sigma$ is a squaring, then both $\sigma$ and $[\,,]_{\sigma}$ are surjective onto $\IF_2^n$. Indeed, to prove the surjectivity of $\sigma$, it suffices to show that $\IF_2^n\setminus\{0\}\subseteq\im(\sigma)$, as we already know that $\sigma(0)=0$ by the above. Now by assumption, there is an $x\in\IF_2^m\setminus\{0\}$ with $\sigma(x)\not=0$ (otherwise, $\sigma$, and thus $[\,,]_{\sigma}$, would be constantly $0$). Also by assumption, for each given $y\in\IF_2^n\setminus\{0\}$, there is a $b\in B$ such $\sigma(x)^b=y$, and we can write $b=\varphi(a)$ for some $a\in A$. Then $\sigma(x^a)=\sigma(x)^{\varphi(a)}=\sigma(x)^b=y$. The proof of the surjectivity of $[\,,]_{\sigma}$ is analogous.
\item If $(\sigma\colon\IF_2^m\rightarrow\IF_2^n,\varphi\colon A\twoheadrightarrow B)$ is a datum and $A_1\leq A$ acts transitively on $\IF_2^m\setminus\{0\}$, then $(\sigma,\varphi_{\mid A_1})$ is also a datum.
\end{enumerate}
\end{remark}

Before introducing the notion of equivalence of data resp.~squarings, we make some simple observations. Fix $n,m\in\IN^+$, and note that the group $\GL_n(2)$ acts on the left on the class of data
\[
\delta\coloneq(\sigma\colon\IF_2^m\rightarrow\IF_2^n,\varphi\colon A\twoheadrightarrow B)
\]
via $\leftidx{^U}\delta=(\leftidx{^U}\sigma,\leftidx^{U}\varphi)$ where, for $U\in\GL_n(2)$,
\[
(\leftidx{^U}\sigma)(x)\coloneq\sigma(x)U^{-1}\text{ for all }x\in\IF_2^m,
\textup{and}\qquad
\leftidx{^U}\varphi\colon A\twoheadrightarrow B^{U^{-1}},a\mapsto\varphi(a)^{U^{-1}}.
\]
Similarly, the group $\GL_m(2)$ acts on the right on this class via $\delta^T=(\sigma^T,\varphi^T)$ where, for $T\in\GL_m(2)$,
\[
\sigma^T(x)\coloneq\sigma(xT)\text{ for all }x\in\IF_2^m,
\textup{and}\qquad
\varphi^T\colon A^T\twoheadrightarrow B,a^T\mapsto\varphi(a).
\]
These two group actions commute. We now define:

\begin{definition}\label{equivalenceDef}
Let $n,m\in\IN^+$.
\begin{enumerate}
\item Two data $\delta_1=(\sigma_1\colon\IF_2^m\rightarrow\IF_2^n,\varphi_1)$ and $\delta_2=(\sigma_2\colon\IF_2^m\rightarrow\IF_2^n,\varphi_2)$ are called \emph{equivalent} when there exist $T\in\GL_m(2)$ and $U\in\GL_n(2)$ such that $\delta_2=\leftidx{^U}\delta_1^T$.
\item Two squarings $\sigma_1,\sigma_2\colon\IF_2^m\rightarrow\IF_2^n$ are called \emph{equivalent} if and only if there are $T\in\GL_m(2)$ and $U\in\GL_n(2)$ such that $\sigma_2=\leftidx{^U}\sigma_1^T$ (equivalently, if and only if $\sigma_1$ and $\sigma_2$ are the first entries of two equivalent data).
\end{enumerate}
\end{definition}

Let us now describe functions $\Fcal_S$ resp.~$\Fcal_D$ that assign to
each (isomorphism type of a) nonabelian $3$-orbit $2$-group $G$ an
equivalence class of squarings resp.~maximal data. Since $G$ has at
most three distinct element orders, we have that $\exp(G)\mid 4$, and
actually, $\exp(G)=4$ (otherwise, $G$ is either trivial or an
elementary abelian $2$-group, so not a $3$-orbit group). Moreover, for
each $o\in\{1,2,4\}$, the subset $G_o$ of order $o$ elements in $G$ is
one of the three automorphism orbits of $G$. It follows that every
proper nontrivial characteristic subgroup of $G$ is equal to $G_1\cup
G_2$; in particular, $G'=\Z(G)=\Phi(G)=\mho^1(G)=G_1\cup
G_2$. Hence, as asserted in Subsection~\ref{subsec1P3}, both $\Phi(G)$
and $G/\Phi(G)$ are elementary abelian $2$-groups, say
$\Phi(G)\cong\IF_2^n$ and $G/\Phi(G)\cong\IF_2^m$. Also as in
Subsection~\ref{subsec1P3}, we note that each triple of choices of
$\IF_2$-bases $\b_1$ of $G/\Phi(G)$, $\b_2$ of $\Phi(G)$ and of a lift
$\tilde{\b_1}$ of $\b_1$ to $G$ yields a polycyclic generating
sequence $\b$ of $G$ in the sense of \cite[Section 2,
  p.~1446]{CEL04a}, i.e., such that $\b$ refines (also in the sense of
\cite[Section 2, p.~1446]{CEL04a}) a composition series of $G$, so the
associated consistent power-commutator presentation $\P_{\b}$ of $G$
is refined and thus (and since $\Phi(G)=\Z(G)$ is of exponent $2$)
of a form as in Proposition~\ref{4presProp}(1). Set
$\sigma_{\b}\coloneq\sigma_{\P_{\b}}$ and
$\varphi_{\b}\coloneq\varphi_{\P_{\b}}$ (using the notation from
Proposition~\ref{4presProp}(2,6) and that $[\,,]_{\P_{\b}}$ is
surjective onto $\IF_2^n$). Since both $G_2=\Phi(G)\setminus\{1\}$ and
$G_4=G\setminus\Phi(G)$ are automorphism orbits of $G$, $\varphi_{\b}$
is an equivariance witness for $\sigma_{\b}$ by
Proposition~\ref{4presProp}(5,6). Moreover, the function
$[\,,]_{\b}\coloneq[\,,]_{\P_{\b}}=[\,,]_{\sigma_{\b}}$ (in the notations of
Proposition~\ref{4presProp}(2) and Definition~\ref{datumDef}(1)) is
surjective (as already noted above, in particular nontrivial),
biadditive and alternating (by Proposition~\ref{4presProp}(2)). It
follows that $\sigma_{\b}$ is a squaring and
$(\sigma_{\b},\varphi_{\b})$ is a datum, which is maximal since
otherwise, there would exist a matrix pair
$(\psi_1,\psi_2)\in(\GL_m(2)\times\GL_n(2))\setminus\im(\gamma)$ (for
the notation $\gamma$, see Proposition~\ref{4presProp}(5)) such that
$\sigma_{\P}\circ\psi_1=\psi_2\circ\sigma_{\P}$, contradicting
Proposition~\ref{4presProp}(5). Finally, one easily checks that by the
definition of equivalence of squarings resp.~data, any other choices
of $\IF_2$-bases $\b_1'$ and $\b_2'$ of $G/\Phi(G)$ and $\Phi(G)$
respectively as well as of a lift $\tilde{\b_1'}$ of $\b_1'$ to $G$
lead to an equivalent squaring $\sigma_{\b'}$ resp.~maximal datum
$(\sigma_{\b'},\varphi_{\b'})$. This allows us to define
$\Fcal_{S}(G)$ resp.~$\Fcal_{D}(G)$ as the unique equivalence class of
squarings resp.~maximal data that are obtained through this
construction, and they depend only on the isomorphism type of $G$ as
each isomorphism $\iota\colon G_1\rightarrow G_2$, where $G_1$ (and thus
also $G_2$) is a nonabelian $3$-orbit $2$-group, maps every polycyclic
generating sequence (pcgs) $\b^{(1)}$ of $G_1$ obtained by
concatenating a lift of an $\IF_2$-basis of $G_1/\Phi(G_1)$ with an
$\IF_2$-basis of $\Phi(G_1)$ to such a pcgs $\b^{(2)}$ of $G_2$ such
that $\sigma_{\b^{(1)}}=\sigma_{\b^{(2)}}$ and
$\varphi_{\b^{(1)}}=\varphi_{\b^{(2)}}$.

\begin{proposition}\label{fProp}
The functions $\Fcal_{S}$ resp.~$\Fcal_{D}$ are bijections from the class of isomorphism types of nonabelian $3$-orbit $2$-groups to the class of equivalence classes of squarings resp.~maximal data.
\end{proposition}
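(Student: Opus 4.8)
The plan is to construct an explicit inverse $\mathcal{G}$ to each of $\Fcal_S$ and $\Fcal_D$ out of Proposition~\ref{4presProp}(3), and then to verify that the two maps are mutually inverse. Given an equivalence class of squarings I would pick a representative $\sigma\colon\IF_2^m\to\IF_2^n$; since $\sigma$ is a squaring, $[,]_\sigma$ is biadditive, alternating (Remark~\ref{datumRem}(3)) and satisfies the cocycle identity defining it, so Proposition~\ref{4presProp}(3) yields a consistent power-commutator presentation $\P$ of the form in Proposition~\ref{4presProp}(1) with $\sigma_\P=\sigma$ and $[,]_\P=[,]_\sigma$. Define $\mathcal{G}([\sigma])$, and likewise $\mathcal{G}([(\sigma,\varphi)])$ on maximal data, to be the isomorphism type of $G_\P$ (note $G_\P$ depends only on $\sigma$, since the relators of $\P$ are determined by $\sigma$ and $[,]_\sigma$). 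The work then splits into: (i) checking $G_\P$ is a nonabelian $3$-orbit $2$-group; (ii) checking $\mathcal{G}$ descends to equivalence classes; and (iii) checking the two mutual-inverse identities.

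For (i), the group $G_\P$ has order $2^{m+n}$ and exponent dividing $4$, and it is nonabelian because $[,]_\sigma$ is nontrivial. The crucial point is that $\sigma(u)\neq0$ for every nonzero $u$: if $\sigma$ vanished at one nonzero vector, then, transporting by the transitive action of the domain $A$ of an equivariance witness and using $\sigma(x^g)=\sigma(x)^{\varphi(g)}$, it would vanish on all of $\IF_2^m$, contradicting the nontriviality of $[,]_\sigma$. Combined with the surjectivity of $\sigma$ (Remark~\ref{datumRem}(4)), this shows $\Phi(G_\P)=\langle y_1,\dots,y_n\rangle\cong\IF_2^n$ is exactly $\{1\}$ together with the involutions, while $G_\P\setminus\Phi(G_\P)$ consists exactly of the elements of order $4$. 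That these two sets (and $\{1\}$) are the orbits follows from Proposition~\ref{4presProp}(4,5,6): an equivariance witness contributes, for each of its elements, a pair lying in $S_\P$, whence $A\leq A_\P$ and $B\leq B_\P$, so $A_\P$ is transitive on $\IF_2^m\setminus\{0\}$ and $B_\P$ on $\IF_2^n\setminus\{0\}$. Transitivity of $\gamma_2(\Aut(G_\P))=B_\P$ directly gives a single orbit on $\Phi(G_\P)\setminus\{1\}$. For $G_\P\setminus\Phi(G_\P)$ I would combine the transitivity of $\gamma_1(\Aut(G_\P))=A_\P$ on the nontrivial cosets of $\Phi(G_\P)$ with the fact that the central automorphisms $C_\P$ of Proposition~\ref{4presProp}(4) act transitively within each such coset (replacing a generator $x_i$ occurring in a coset representative by $x_iw_2(v_i)$ realizes every translate by a central element): sending one element into the coset of another and then adjusting inside that coset produces the required automorphism.

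For (ii) I would show that equivalent squarings give isomorphic groups, which is the converse of the computation preceding the proposition: the double action of $\GL_n(2)\times\GL_m(2)$ from Definition~\ref{equivalenceDef} is exactly the effect on the squaring of changing the chosen $\IF_2$-bases $\b_1,\b_2$ and the lift $\tilde{\b_1}$. Concretely, if $\sigma_2={}^U\sigma_1^T$, then applying the base changes $T$ and $U$ to the standard pcgs of $G_{\P_1}$ gives a new pcgs whose squaring is $\sigma_2$, and the associated presentation coincides with $\P_2$ (both are of the form in Proposition~\ref{4presProp}(1) with the same relators, being determined by $(\sigma_2,[,]_{\sigma_2})$), so $G_{\P_1}\cong G_{\P_2}$. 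For (iii), evaluating $\Fcal_S$ on $G_\P$ via the standard pcgs coming from $\P$ returns $\sigma_\P=\sigma$ by Proposition~\ref{4presProp}(2), giving $\Fcal_S(\mathcal{G}([\sigma]))=[\sigma]$; conversely $\mathcal{G}(\Fcal_S(G))$ is the type of $G_{\P_\b}$, which is $G$ since $\P_\b$ presents $G$. The argument for $\Fcal_D$ is identical, with one extra remark: if $(\sigma,\varphi)$ is maximal, then $(\sigma,\varphi_\P)$ is a datum extending it (using $A\leq A_\P$, the transitivity just established, and the surjectivity of $\varphi_\P$ from Proposition~\ref{4presProp}(6)), so maximality forces $A=A_\P$ and $\varphi=\varphi_\P$, matching the recovered datum with the original.

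I expect the main obstacle to be (i), and within it specifically the transitivity of $\Aut(G_\P)$ on $G_\P\setminus\Phi(G_\P)$: this is the one place where the \enquote{linear} automorphisms recorded by $\gamma$ and the central automorphisms $C_\P$ must be used in tandem, and it relies on the supporting facts that $\sigma$ is nonzero on nonzero vectors and that $A\leq A_\P$, $B\leq B_\P$.
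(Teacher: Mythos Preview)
Your proposal is correct and follows essentially the same approach as the paper: both build $G_\P$ from a squaring via Proposition~\ref{4presProp}(3), verify it is a nonabelian $3$-orbit $2$-group by combining the transitivity of $A_\P$ and $B_\P$ (inherited from an equivariance witness via Proposition~\ref{4presProp}(5,6)) with the coset-transitivity of $C_\P$ from Proposition~\ref{4presProp}(4), and then match equivalent squarings with isomorphic groups via a base-change argument. The only organizational difference is that you package this as constructing an explicit inverse $\mathcal{G}$ (and add the harmless extra observation that $\sigma$ is nonvanishing on nonzero vectors), whereas the paper phrases it as separate surjectivity and injectivity checks for $\Fcal_S$.
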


\begin{proof}
It suffices to prove both surjectivity and injectivity only for $\Fcal_{S}$, as the equivalence classes of squarings are by definition in natural bijection with the equivalence classes of maximal data, and the composition of this natural bijection with $\Fcal_{S}$ is~$\Fcal_{D}$.

For surjectivity: Let $\sigma\colon\IF_2^m\rightarrow\IF_2^n$ be a squaring with equivariance witness $\varphi\colon A\twoheadrightarrow B$. By Proposition~\ref{4presProp}(3), there is a presentation $\P$ as in Proposition~\ref{4presProp}(1) such that $\sigma=\sigma_{\P}$. Let $G_{\P}$ denote the finite group represented by $\P$. By Proposition~\ref{4presProp}(5,6) and the definition of \enquote{equivariance witness}, we have $A\leq A_{\P}$, $B\leq B_{\P}$ and $\varphi=(\varphi_{\P})_{\mid A}$. Consequently, since $A$ and $B$ act transitively on $\IF_2^m\setminus\{0\}$ and $\IF_2^n\setminus\{0\}$ respectively, we conclude that $\Aut(G_{\P})$ acts transitively on the set of nontrivial cosets of $\Phi(G_{\P})=\langle y_1,\ldots,y_n\rangle_{G_{\P}}$ in $G_{\P}$, and that it also acts transitively on $\Phi(G_{\P})\setminus\{1\}$. So in order to show that the action of $\Aut(G_{\P})$ on $G_{\P}$ admits just the three orbits $\{1\}$, $\Phi(G_{\P})\setminus\{1\}$ and $G_{\P}\setminus\Phi(G_{\P})$, it only remains to check that the coset $(x_1)_{G_{\P}}\Phi(G_{\P})$ is contained in a single orbit, which is clear from Proposition~\ref{4presProp}(4).

For injectivity: Let $G_1$ and $G_2$ be nonabelian $3$-orbit $2$-groups that are mapped by $\Fcal_{S}$ to equivalent squarings $\sigma^{(1)}$ and $\sigma^{(2)}$, say both mapping $\IF_2^m\rightarrow\IF_2^n$. Then $G_k$, $k\in\{1,2\}$, is represented by the consistent power-commutator presentation $\P_k$ which is of the same form as the presentation $\P$ in Proposition~\ref{4presProp}(1), but with respect to the choices $\sigma_i\coloneq\sigma_i^{(k)}\coloneq\sigma^{(k)}(e_i)$ for $i=1,\ldots,m$ and $\pi_{i,j}\coloneq\pi_{i,j}^{(k)}\coloneq\sigma^{(k)}(e_i+e_j)+\sigma^{(k)}(e_i)+\sigma^{(k)}(e_j)$ for $1\leq i<j\leq m$. Identify $G_1$ with the quotient $\F(x_1,\ldots,x_m,y_1,\ldots,y_n)/R$, where $R$ is the normal subgroup of the free group $\F(x_1,\ldots,x_n,y_1,\ldots,y_m)$ corresponding to the defining relations of $\P_1$. Let $\tilde{\b_1}$ be the initial segment consisting of those entries that are outside $\Phi(G_1)$ of the standard pcgs $\b$ of $G_1$ with respect to the presentation $\P_1$, let $\b_1$ be the projection of $\tilde{\b_1}$ into $G_1/\Phi(G_1)$, and let $\b_2$ be the terminal segment of $\b$ consisting of those entries that are inside $\Phi(G_1)$. Then, writing $\sigma^{(2)}=\leftidx{^U}(\sigma^{(1)})^T$ for $U\in\GL_n(2)$ and $T\in\GL_m(2)$, it is immediate to check that the consistent power-commutator presentation of $G_1$ associated with the concatenation of $\widetilde{\b_1T}$ (an arbitrary lift of $\b_1T$ to $G_1$) and $\b_2U^{-1}$ is $\P_2$, and so $G_1\cong G_2$, as required.
\end{proof}

\section{Excluding insolvable data in Theorem~\ref{normalizeTheo}}\label{sec3}

In the spirit of the previous section, in order to classify the nonabelian $3$-orbit $2$-groups, it suffices to do so for the equivalence classes of squarings resp.~(maximal) data. We make essential use of the known classification of finite linear groups that act transitively on nonzero vectors, due to Huppert \cite{Hup57a} and Hering \cite{Her85a} and customarily called Hering's theorem, see e.g.~\cite[Appendix 1]{Lie87a}. We note that each of the symbols $\GammaL_1(p^d)$, $\SL_a(q)$, $\Sp_{2a}(q)$ and $G_2(q)'$ used in the formulation of Hering's theorem from \cite{Lie87a} denotes a subgroup of the corresponding general linear group $\GL_d(p)$ that is unique up to $\GL_d(p)$-conjugacy (and thus up to permutation group isomorphism); in particular, $G_2(q)'$ there denotes any (necessarily maximal and unique up to conjugacy by \cite[tables 8.28 and 8.29, pp.~391f.]{BHR13a}) copy of that group in (one of the copies of) $\Sp_6(q)\leq\GL_6(q)\leq\GL_d(p)$ with $q^6=p^d$, see also \cite[proof of Hering's theorem, p.~513]{Lie87a}.

Let $(\sigma\colon\IF_2^m\rightarrow\IF_2^n,\varphi\colon A\twoheadrightarrow B)$ be a datum. By Hering's theorem, one of the following three cases applies to $A\leq\GL_m(2)$:
\begin{enumerate}
\item $A\unrhd\SL_a(2^b)$ with $m=a\cdot b$ and $a>1$, or $A\unrhd\Sp_a(2^b)$ with $m=a\cdot b$ and $2\mid a$, or $A\unrhd G_2(2^b)'$ with $m=6b$.
\item $A\leq\GammaL_1(2^m)$.
\item $m=4$, and $A\in\{\Alt(6),\Alt(7)\}$.
\end{enumerate}

Analogously, $B\leq\GL_n(2)$ satisfies one of the following:
\begin{enumerate}
\item $B\unrhd\SL_c(2^d)$ with $n=c\cdot d$ and $c>1$, or $B\unrhd\Sp_c(2^d)$ with $n=c\cdot d$ and $2\mid c$, or $B\unrhd G_2(2^d)'$ with $n=6d$.
\item $B\leq\GammaL_1(2^n)$.
\item $n=4$, and $B\in\{\Alt(6),\Alt(7)\}$.
\end{enumerate}

Note that the third case is impossible for $B$ if $m\geq 5$, since none of $\Alt(6)$ and $\Alt(7)$ is a composition factor of $A$ then. Also, note that $A$ resp.~$B$ is solvable if and only if it is contained in $\GammaL_1(2^m)$ resp.~$\GammaL_1(2^n)$ (using that $\SL_2(2)=\Sp_2(2)=\GammaL_1(2^2)$ equals the full general linear group $\GL_2(2)$).

We will now show that $A$ and thus also $B$ must actually be solvable.

\begin{proposition}\label{normalizeProp}
There are no data $(\sigma\colon\IF_2^m\rightarrow\IF_2^n,\varphi\colon A\twoheadrightarrow B)$ where $A$ is insolvable, i.e., such that one of the following holds:
\begin{enumerate}
\item $m\geq3$ and $A\unrhd\SL_a(2^b)$ for some $a,b\in\IN^+$ with $m=a\cdot b$ and $a>1$.
\item $m\geq4$ and $A\unrhd\Sp_a(2^b)$ for some $a,b\in\IN^+$ with $m=a\cdot b$ and $2\mid a$.
\item $m\geq6$ and $A\unrhd G_2(2^b)'$ with $b=m/6$.
\end{enumerate}
\end{proposition}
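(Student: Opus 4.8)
The plan is to treat each insolvable case separately, fixing the quasi-simple normal subgroup $L\unlhd A$ supplied by Hering's theorem — so $L$ is $\SL_a(2^b)$, $\Sp_a(2^b)$ or $G_2(2^b)'$ — and recalling that $L$ itself already acts transitively on $\IF_2^m\setminus\{0\}$. The first step is to rule out $\varphi(L)=1$. If $\varphi(L)=1$, then equivariance forces $\sigma$ to be $L$-invariant, hence constant, say equal to $c$, on the single orbit $\IF_2^m\setminus\{0\}$; evaluating $[x,y]_\sigma=\sigma(x+y)+\sigma(x)+\sigma(y)$ on triples of nonzero vectors whose pairwise and total sums are nonzero (these exist since $m\ge 3$) and invoking biadditivity of $[,]_\sigma$ gives $c=c+c=0$, so $\sigma\equiv 0$, contradicting Definition~\ref{datumDef}(3). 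Hence $\varphi(L)\ne 1$, and since $L$ is quasi-simple, $\ker(\varphi_{\mid L})\le\zeta L$ and $\varphi(L)\cong L/Z_0$ is again quasi-simple.

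Next I would extract two constraints on the target space $W:=\IF_2^n$. Since $L$ is transitive on $\IF_2^m\setminus\{0\}$ and $\sigma$ is equivariant, $\sigma(\IF_2^m\setminus\{0\})$ is a single $\varphi(L)$-orbit; because $\sigma$ is surjective (Remark~\ref{datumRem}(4)) and $\sigma(0)=0$, this orbit is all of $W\setminus\{0\}$, so $\varphi(L)$ \emph{itself} acts transitively on $W\setminus\{0\}$, and comparing orbit lengths yields $(2^n-1)\mid(2^m-1)$, i.e.\ $n\mid m$ and $n\ge 2$. Secondly, $[,]_\sigma$ is biadditive, alternating and $A$-equivariant (Remark~\ref{datumRem}(1),(3)), so it induces a surjective $A$-module homomorphism $\Lambda^2\IF_2^m\twoheadrightarrow W$; as a transitive linear group acts irreducibly, $W$ is an irreducible $A$-module and hence occurs as a composition factor of the exterior square $\Lambda^2\IF_2^m$, so in particular every absolutely irreducible constituent of $W$ appears in $\Lambda^2\IF_2^m\otimes\overline{\IF_2}$.

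I expect these two facts to dispose of almost everything. Writing $\IF_2^m\otimes\overline{\IF_2}$ as the sum of the Frobenius twists of the natural module and applying Steinberg's tensor product theorem, the absolutely irreducible constituents of $\Lambda^2\IF_2^m$ are the twists of $\Lambda^2(\text{natural})$ together with the tensor products of two \emph{distinct} such twists. On the other hand $W$, being a quasi-simple transitive linear module for $\varphi(L)\cong L/Z_0$, cannot sit inside the solvable group $\GammaL_1(2^n)$, so by Hering's theorem applied to $B\ge\varphi(L)$ it must be of natural type. A weight computation then shows that for $\SL_a(2^b)$ with $a\ne 3$, for $\Sp_a(2^b)$ with $a\ne 4$, and for $G_2(2^b)'$, none of the constituents of $\Lambda^2\IF_2^m$ is a faithful transitive linear module (they are trivial, higher-exterior- or Steinberg-type), an immediate contradiction.

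The hard part, and the only surviving cases, are the two characteristic-$2$ coincidences $L=\SL_3(2^b)$, where $\Lambda^2(\text{natural})$ is the dual natural module, and $L=\Sp_4(2^b)$, where $\Lambda^2(\text{natural})$ has the ``other'' $4$-dimensional natural-type module as a constituent. Here $W$ is forced to be that transitive constituent and $n=m$, so $\sigma$ restricts to an equivariant \emph{bijection} $\IF_2^m\setminus\{0\}\to W\setminus\{0\}$; an index count (and $\gcd(2,2^b-1)=1$ for $\Sp_4$) forces $Z_0=1$ and shows $\varphi_{\mid L}$ must carry the natural-module vector stabilizer onto the vector stabilizer in $W$. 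The obstruction is that these stabilizers lie in the two non-conjugate maximal parabolics interchanged by the graph automorphism, so the bijection cannot be excluded on stabilizer grounds alone; instead I would have to argue directly — by analysing the $L$-equivariant quadratic maps $\IF_2^m\to W$ — that the required biadditive, equivariant squaring $\sigma$ does not exist. This case-specific nonexistence argument for $\SL_3(2^b)$ and $\Sp_4(2^b)$ is the crux of the proposition. Once all insolvable $A$ are excluded, the final assertion follows: for a nonabelian $3$-orbit $2$-group $G$, Proposition~\ref{4presProp}(4),(5) exhibits $\Aut(G)$ with the abelian normal subgroup $C_\P$ of central automorphisms and quotient isomorphic to $S_\P\cong A_\P$, the first projection of the maximal datum $\Fcal_D(G)$; the latter is solvable by what has been proved, so $\Aut(G)$ is solvable.
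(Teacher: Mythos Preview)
Your approach via the exterior square is genuinely different from the paper's and is sound as far as it goes: the form $[,]_\sigma$ does factor through an $L$-equivariant surjection $\Lambda^2_{\IF_2}V\twoheadrightarrow W$, and a Steinberg-type analysis of the absolutely irreducible constituents does eliminate every case except $L=\SL_3(2^b)$ and $L=\Sp_4(2^b)$. But you stop precisely where the content lies. You assert that in these residual cases ``the bijection cannot be excluded on stabilizer grounds alone'' and propose instead to classify $L$-equivariant quadratic maps, without carrying this out. That diagnosis is actually off: a stabilizer argument \emph{does} finish the job, and this is exactly what the paper does --- uniformly across all families, not just your residual ones.

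The paper never looks at $\Lambda^2 V$. After reducing to $A=L$ and $n=m$, it writes $\varphi$ as $\iota^\epsilon\cdot\conj(\beta')$ with $\iota$ the graph (inverse-transpose) automorphism and $\beta'\in\GL_m(2)$, and splits on $\epsilon$. For $\epsilon=0$ the equivariance yields $\Stab_L(x)=\Stab_L\bigl(\sigma(x)^{(\beta')^{-1}}\bigr)$, hence $\sigma(x)=(\lambda(x)\,x)^{\beta'}$ with $\lambda$ constant by transitivity of $L$, and then $[,]_\sigma\equiv 0$. For $\epsilon=1$ the key point is that $\Stab_L(x)^\iota$, viewed concretely inside $\GL_m(2)$, has \emph{no nonzero common fixed vector} (an explicit matrix check for $\SL_a$, $a\ge 3$, and a separate one for $\Sp_4$ carried out in the appendix); since $\sigma(x)$ must be fixed by $\Stab_L(x)^\varphi$, this forces $\sigma(x)=0$, a contradiction. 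Thus the two cases you flagged as needing an ad~hoc analysis of quadratic maps are dispatched by exactly the refined stabilizer/fixed-point reasoning your proposal set aside. Your derivation of the ``In particular'' clause is correct.
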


\begin{proof}
Assume, for a contradiction, that such a datum exists. We give a detailed argument for the case $A\unrhd\SL_a(2^b)$ and then explain how basically the same argument can also be applied in the other two cases.

By Remark~\ref{datumRem}(5), we may assume w.l.o.g.~that $A=\SL_a(2^b)$. In particular, $\PSL_a(2^b)$ is the only nonabelian composition factor of $A$. Since $B$ is a quotient of $A$ acting transitively on $\IF_2^n\setminus\{0\}$, this leaves only the following possibilities for $B$:
\begin{enumerate}
\item $n=m$, $B\unrhd\SL_a(2^b)$ with respect to a possibly different $\IF_{2^b}$-vector space structure on $\IF_2^m$.
\item $B\leq\GammaL_1(2^n)$.
\end{enumerate}

In the latter case, $B$ is solvable and therefore (since $\SL_a(2^b)$ is perfect), $\varphi$ is trivial. But since $A=\SL_a(2^b)$ acts transitively on $\IF_{2^b}^a\setminus\{0\}$, this implies that $\sigma$ is constant on $\IF_{2^b}^a\setminus\{0\}$, say $\sigma(x)=c$ for all $x\in\IF_{2^b}^a\setminus\{0\}$. It follows that for any two distinct nonzero vectors $x,y\in\IF_{2^b}^a$,
\[
[x,y]=\sigma(x+y)+\sigma(x)+\sigma(y)=c+c+c=c.
\]
In particular, for any fixed nonzero vector $x\in\IF_{2^b}^a$, we have $[x,y]=c$ for all but at most two $y\in\IF_{2^b}^a$. Since $ab=m\geq3$ by assumption and $[\,,]$ is biadditive, this is only possible if $c=0$, which implies that $\sigma$ and thus $[\,,]$ is trivial, a contradiction.

So we may assume that the former case for $B$ applies. Then $|\SL_a(2^b)|=|A|\geq|B|\geq|\SL_a(2^b)|$, so $B$ itself is some copy of $\SL_a(2^b)$ in $\GL_m(2)$, possibly a different one than $A$. We will continue to write $\IF_2^m=\IF_{2^b}^a$ with respect to the $\IF_{2^b}$-vector space structure associated with $A=\SL_a(2^b)$ (and whenever we write $\SL_a(2^b)$, we mean $A$). There exists $\gamma\in\GL_n(2)$ such that $B=A^{\gamma}=A^{\conj(\gamma)}$. It follows that $\varphi\conj(\gamma)^{-1}$ is an automorphism of $A$. Since $\SL_a(2^b)$ is quasisimple with central quotient $\PSL_a(2^b)$, we have $\Out(\SL_a(2^b))\cong\Out(\PSL_a(2^b))$, and so $\varphi\conj(\gamma)^{-1}$, like any automorphism of $\SL_a(2^b)$, can be written in the form $\iota^{\varepsilon}\conj(\beta)$ where $\beta$ is  an element of the semilinear group $\GammaL_a(2^b)$, $\iota$ is the inverse-transpose automorphism and $\varepsilon\in\{0,1\}$. Hence $\varphi=(\iota^{\varepsilon}\conj(\beta\gamma))_{\mid A}=(\iota^{\varepsilon}\conj(\beta'))_{\mid\SL_a(2^b)}$, where $\beta'\coloneq\beta\gamma$ is a bijective additive map on $\IF_{2^b}^a$.

We now show that we may assume that $\varepsilon=0$. Indeed, if $a=2$, then this is clear since $\iota$ is an inner automorphism of $\GL_a(2^b)=\GL_2(2^b)$ then, so assume $a\geq3$. If $\varepsilon=1$, then since for all $g\in\SL_a(2^b)$ and all $x\in\IF_{2^b}^a$, $\sigma(x)^{\varphi(g)}=\sigma(x^g)$, we find that the elements of $\Stab_{\SL_a(2^b)}(x)^{\varphi}=\Stab_{\SL_a(2^b)}(x)^{\iota\beta'}$ have $\sigma(x)$ as a common fixed point, and so the elements of $\Stab_{\SL_a(2^b)}(x)^{\iota}=\{g^{\tr}\mid g\in\Stab_{\SL_a(2^b)}(x)\}$ have $\sigma(x)^{(\beta')^{-1}}$ as a common fixed point. But choosing $x\coloneq(1,0,\ldots,0)$, we find that the elements of $\Stab_{\SL_a(2^b)}(x)^{\iota}$ are just the $a\times a$ matrices over $\IF_{2^b}$ of the form
\[
\scalemath{0.9}{\begin{pmatrix}
1 & b_1 & b_2 & \cdots & b_{a_1} \\
0 & c_{1,1} & c_{1,2} & \cdots & c_{1,a-1} \\
0 & c_{2,1} & c_{2,2} & \cdots & c_{2,a-1} \\
\vdots & \vdots & \vdots & \vdots & \vdots \\
0 & c_{a-1,1} & c_{a-1,2} & \cdots & c_{a-1,a-1}
\end{pmatrix}}
\]
where $b_1,\ldots,b_{a-1}$ are arbitrary elements of $\IF_{2^b}$ and $(c_{i,j})_{i,j=1}^{a-1}$ is an arbitrary element of $\SL_{a-1}(2^b)$. We argue that for every nonzero vector $y=(y_1,\ldots,y_a)$ in $\IF_{2^b}^a$, there is a matrix $M(y)$ of the above form that does not fix $y$, yielding a contradiction. Indeed, if $y_1$ is the only nonzero entry of $y$, then just let $M(y)$ be any matrix of the above form where $b_1\not=0$. And if $(y_2,\ldots,y_a)\not=(0,\ldots,0)$, then choose $b_1=b_2=\cdots=b_{a-1}=0$ and let $(c_{i,j})_{i,j=1}^{a-1}\in\SL_{a-1}(2^b)$ be such that it does not fix $(y_2,\ldots,y_a)$ (here, we need to use that $a\geq3$).

So we may indeed assume $\varepsilon=0$, and thus $\varphi=\conj(\beta')_{\mid\SL_a(2^b)}$ for some bijective, additive map $\beta'$ on $\IF_{2^b}^a$. Using once more that for all $g\in\SL_a(2^b)$ and all $x\in\IF_{2^b}^a$,
\[
\sigma(x)^{g^{\beta'}}=\sigma(x)^{\varphi(g)}=\sigma(x^g),
\]
we can now conclude that
\begin{align*}
  \Stab_{\SL_a(2^b)}(x)\subseteq\varphi^{-1}[\Stab_{\varphi[\SL_a(2^b)]}(\sigma(x))]&=\Stab_{\SL_a(2^b)^{\beta'}}(\sigma(x))^{(\beta')^{-1}}\\&=\Stab_{\SL_a(2^b)}(\sigma(x)^{(\beta')^{-1}}),
\end{align*}
and this inclusion must be an equality.
Hence
\[
\IF_{2^b}x=\fix_{\IF_{2^b}^a}(\Stab_{\SL_a(2^b)}(x))=\fix_{\IF_{2^b}^a}(\Stab_{\SL_a(2^b)}(\sigma(x)^{(\beta')^{-1}}))=\IF_{2^b}\sigma(x)^{(\beta')^{-1}}
\]
for all $x\in\IF_{2^b}^a\setminus\{0\}$. In particular, there exists a function $\lambda\colon\IF_{2^b}^a\rightarrow\IF_{2^b}$ such that for all $x\in\IF_{2^b}^a$, $\sigma(x)^{(\beta')^{-1}}=\lambda(x)x$, or equivalently, $\sigma(x)=(\lambda(x)x)^{\beta'}$, and we may assume w.l.o.g.~that $\lambda(0)=\lambda(v_0)$ for any fixed $v_0\in\IF_{2^b}^a\setminus\{0\}$. Moreover, since $[x,y]=\sigma(x+y)+\sigma(x)+\sigma(y)$ for all $x,y\in\IF_{2^b}^a$, we get that

\begin{equation}\label{eq7}
[x,y]=((\lambda(x+y)+\lambda(x))x+(\lambda(x+y)+\lambda(y))y)^{\beta'}.
\end{equation}

Now for all $g\in\SL_a(2^b)$ and all $x\in\IF_{2^b}^a$, we have that
\[
((\lambda(x)x)^g)^{\beta'}=((\lambda(x)x)^{\beta'})^{g^{\beta'}}=\sigma(x)^{g^{\beta'}}=\sigma(x^g)=(\lambda(x^g)x^g)^{\beta'},
\]
so that $\lambda(x^g)=\lambda(x)$. As $\SL_a(2^b)$ acts transitively on the nonzero vectors, this entails that $\lambda$ is constant on $\IF_{2^b}^a\setminus\{0\}$ and thus constant as a whole. Hence by Formula (\ref{eq7}), $[\,,]$ is trivial, a contradiction.

Let us now show why this argument can also be applied in the other two cases. The crucial observation is that we have actually only used a few properties of the group $\SL_a(2^b)$ and of its action on $\IF_{2^a}^b$ in the above argument, namely:
\begin{enumerate}
\item Every group automorphism of $\SL_a(2^b)\leq\GL_a(2^b)$ can be written as $\iota^{\varepsilon}\conj(\beta)$, where $\beta$ is a suitable bijective, additive map on $\IF_{2^b}^a$, $\iota$ is the inverse-transpose automorphism and $\varepsilon\in\{0,1\}$. The elements of the image $\Stab_{\SL_a(2^b)}((1,0,\ldots,0))^{\iota}$ do not have a common nonzero fixed point.
\item $\SL_a(2^b)$ acts transitively on $\IF_{2^b}^a\setminus\{0\}$ (a well-known and easy to check fact, which also conveniently follows from the \enquote{conversely} clause in \cite[Hering's theorem, Appendix 1]{Lie87a}).
\item $\PSL_a(2^b)$ is the only nonabelian composition factor of $\SL_a(2^b)$.
\item $\SL_a(2^b)$ is perfect.
\item For each $x\in\IF_{2^b}^a\setminus\{0\}$, we have that $\fix_{\IF_{2^b}^a}(\Stab_{\SL_a(2^b)}(x))=\IF_{2^b}x$.
\end{enumerate}

Analogous properties also hold for the groups with which the other two cases are concerned, so that the argument can indeed be transferred. More precisely:
\begin{enumerate}
\item If $a\not=4$, then every group automorphism of $\Sp_a(2^b)\leq\GL_a(2^b)$ can be written as $\conj(\beta)$, where $\beta$ is a suitable element of $\GammaSp_a(2^b)$; in particular, $\beta$ is additive. Furthermore, every group automorphism of $\Sp_4(2^b)\leq\GL_4(2^b)$ can be written as $\iota^{\varepsilon}\conj(\beta)$ where $\beta$ is a suitable element of $\GammaSp_4(2^b)$, $\iota$ is the explicit graph automorphism of $\Sp_4(2^b)$ described in \cite[p.~235]{Sol77a} and $\varepsilon\in\{0,1\}$. With this choice of $\iota$, some easy computations (see \S\S\ref{app2} of the Appendix for details) show that the elements of $\Stab_{\Sp_4(2^b)}((1,0,\ldots,0))^{\iota}$ do not have a common nonzero fixed point.
\item $\Sp_a(2^b)$ acts transitively on $\IF_{2^b}\setminus\{0\}$ by \cite[Hering's theorem, Appendix 1]{Lie87a}.
\item $\PSp_a(2^b)=\Sp_a(2^b)$ is the only nonabelian composition factor of $\Sp_a(2^b)$.
\item  $\Sp_a(2^b)$ is perfect.
\item For each $x\in\IF_{2^b}^a\setminus\{0\}$, we have that $\fix_{\IF_{2^b}^a}(\Stab_{\Sp_a(2^b)}(x))=\IF_{2^b}x$, which we will now prove. We need to show that for each $y\in\IF_{2^b}^a\setminus\IF_{2^b}x$, there is a $g\in\Stab_{\Sp_a(2^b)}(x)$ such that $y^g\not=y$. Denote by $\langle,\rangle$ a symplectic form on $\IF_{2^b}^a$ preserved by $\Sp_a(2^b)$. Using that $\Sp_a(2^b)$ acts transitively on the set of pairs $(v,w)\in(\IF_{2^b}^a)^2$ with $\IF_{2^b}$-linearly independent entries such that $\langle v,w\rangle=\langle x,y\rangle$ (which holds by an application of Witt's lemma), we conclude that there exists $g\in\Sp_a(2^b)$ such that $x^g=x$ and $y^g=x+y\not=y$, as required.
\end{enumerate}

Similar arguments can be applied to the groups $G_2(2^b)'$:
\begin{enumerate}
\item Every group automorphism of $G_2(2^b)'\leq\GL_6(2^b)$ can be written as $\conj(\beta)$, where $\beta$ is a suitable bijective, additive map on $\IF_{2^b}^6$. Indeed, for $b=1$, this follows from the fact that $G_2(2)\leq\GL_6(2)$ acts faithfully by conjugation on $G_2(2)'$ (and thus achieves the full automorphism group $\Aut(G_2(2)')$), see \cite[page on $G_2(2)'$]{ATLAS}. For $b\geq 2$, where $G_2(2^b)'=G_2(2^b)$, one can see this as follows: As noted at the beginning of this section, $G_2(2^b)$ may be seen as a member of a unique conjugacy class of maximal subgroups of $\Sp_6(2^b)$. Let $\sigma$ be a generator of the field automorphism subgroup of $\GammaSp_6(2^b)$, acting on $\Sp_6(2^b)$. Then $G_2(2^b)^{\sigma}=G_2(2^b)^g$, or equivalently $G_2(2^b)^{\sigma\conj(g)^{-1}}=G_2(2^b)$, for some $g\in\Sp_6(2^b)$. Now if we can argue that for $k=1,\ldots,b-1$, the action on $G_2(2^b)$ of the power $\psi_k\coloneq(\sigma\conj(g)^{-1})^k$ is \emph{not} by some inner automorphism of $G_2(2^b)$, it will follow (from the known order of $\Aut(G_2(2^b))$) that every automorphism of $G_2(2^b)$ can be written as a composition of an inner automorphism of $G_2(2^b)$ with some power of $\sigma\conj(g)^{-1}$; in particular, our assertion follows. So assume, aiming for a contradiction, that for some $k\in\{1,\ldots,b-1\}$ and some $h\in G_2(2^b)$, $\psi_k\conj(h)$ centralizes $G_2(2^b)$. Viewing $\psi_k\conj(h)$ as an automorphism of $\Sp_6(2^b)$ which involves a nontrivial field automorphism, we get by \cite[Proposition~4.1(I)]{Har92a} that the order of the centralizer of $\psi_k\conj(h)$ in $\Sp_6(2^b)$ cannot exceed $|\Sp_6(2^c)|$ where $c$ is the largest proper divisor of $b$. But $|\Sp_6(2^c)|=2^{9c}(2^{2c}-1)(2^{4c}-1)(2^{6c}-1)\leq 2^{9b/2}(2^b-1)(2^{2b}-1)(2^{3b}-1)$, which is less than $|G_2(2^b)|=2^{6b}(2^{6b}-1)(2^{2b}-1)$, yielding the desired contradiction.
\item $G_2(2^b)'$ acts transitively on $\IF_{2^b}^6\setminus\{0\}$ by~\cite[Hering's theorem, Appendix 1]{Lie87a}.
\item $G_2(2^b)'$ is the only nonabelian composition factor of $G_2(2^b)'$.
\item $G_2(2^b)'$ is perfect.
\item For each $x\in\IF_{2^b}^6\setminus\{0\}$, we have that $\fix_{\IF_{2^b}^6}(\Stab_{G_2(2^b)'}(x))=x\IF_{2^b}$. For $b=1$, this can be checked with GAP \cite{GAP4}, using the two matrix generators of $G_2(2)'\leq\GL_6(2)$ from the ATLAS of Finite Group Representations \cite[page on $G_2(2)'$]{ATLAS}. So we may assume that $b\geq2$, whence $G_2(2^b)'=G_2(2^b)$. Note that by \cite[Lemma 5.1]{Coo81a}, the action of $G_2(2^b)$ on the lines of $\IF_{2^b}^6$ is primitive. Moreover, also by \cite[Lemma 5.1]{Coo81a}, the action of $G_2(2^b)$ on $\IF_{2^b}^6\setminus\{0\}$ cannot be regular, as $\Stab_{G_2(2^b)}(\IF_{2^b}x)$ contains a nontrivial $2$-subgroup, the elements of which must fix $\IF_{2^b}x$ pointwise. It follows that setting $S_y\coloneq\fix_{\IF_{2^b}^6}(\Stab_{G_2(2^b)}(y))\supseteq\IF_{2^b}y$ for each $y\in\IF_{2^b}^6\setminus\{0\}$, the $S_y$ all are proper nontrivial subspaces of $\IF_{2^b}^6$ of a common $\IF_{2^b}$-dimension (the latter by the transitivity of $G_2(2^b)$ on nonzero vectors). Moreover, the set $\{S_y\mid y\in\IF_{2^b}^6\setminus\{0\}\}$ is a $G_2(2^b)$-invariant partition of $\IF_{2^b}^6\setminus\{0\}$ corresponding to a $G_2(2^b)$-invariant partition of the set of lines of $\IF_{2^b}^6$. So by primitivity, each $S_y$ must be a line, whence $S_y=\IF_{2^b}y$ for all $y\in\IF_{2^b}\setminus\{0\}$. In particular, we get the required equality $S_x=\IF_{2^b}x$.\qedhere
\end{enumerate}
\end{proof}

Using Proposition~\ref{normalizeProp}, we can prove Theorem~\ref{normalizeTheo}.

\begin{proof}[Proof of Theorem~\ref{normalizeTheo}]
For every nonabelian $3$-orbit $2$-group $G$, the induced action of $\Aut(G)$ on the Frattini quotient $G/\Phi(G)$ has kernel $\Aut_c(G)$, a $2$-group (see Proposition~\ref{4presProp}(5) and use that $\Phi(G)=\Z(G)$ here). In particular, $\Aut_c(G)$ is solvable. Moreover, the induced action of $\Aut(G)$ on $G/\Phi(G)$ is solvable by Proposition~\ref{normalizeProp}. Hence, $\Aut(G)$ is an extension of two solvable groups, whence it is solvable itself.
\end{proof}

\section{Proof of Theorem~\ref{mainThm} and isomorphisms}\label{sec4}

By Theorem~\ref{normalizeTheo}, we know that the automorphism group of each finite $3$-orbit $2$-group $G$ is solvable. Therefore, these groups are precisely the non-abelian $2$-groups of Dornhoff's classification in \cite[Theorem]{Dornhoff}, which are the following:
\begin{enumerate}
\item the homocyclic groups $(C_4)^n$ for $n\in\IN^+$, which may be viewed as the improper (see the paragraph before Theorem~\ref{mainThm}) Suzuki $2$-groups $A(n,\id)$;
\item the proper Suzuki $2$-groups $A(n,\theta)$ where $n\geq3$ is not a power of $2$ and $\theta\in\Aut(\IF_{2^n})$ is a nontrivial of odd order;
\item the groups $B(n)=\IF_{2^{2n}}\times\IF_{2^n}$ with $n\geq1$,
  $\mu\in\IF_{2^{2n}}^\times$ of order $2^n+1$, and multiplication rule
\[
(\alpha,\zeta)\cdot(\beta,\eta)=(\alpha+\beta,\zeta+\eta+\alpha\beta^{2^n}\mu+\alpha^{2^n}\beta\mu^{-1}).
\]
We allow $n=1$ to include the improper Suzuki $2$-group $B(1,\id,1)\cong\Q_8$;
\item the groups $P(\varepsilon)=\IF_{2^6}\times\IF_{2^3}$, where
  $\varepsilon\in\IF_{2^6}$ has order 63, with multiplication
\[
 (\alpha,\zeta)\cdot(\beta,\eta)=(\alpha+\beta,\zeta+\eta+\alpha\beta^2\varepsilon+\alpha^8\beta^{16}\varepsilon^8).
\]
\end{enumerate}

If $\eta\in\IF_8$ satisfies $\eta^3+\eta+1=0$, then
$\IF_8^\times=\langle\eta\rangle$ and
\[
\{\eta,\eta^2,\eta^4\}
=\IF_8\setminus\{\rho^{-1}+\rho^2\mid \rho\in\IF_8^\times\}
=\IF_8\setminus\{\rho^{-1}+\rho^4\mid \rho\in\IF_8^\times\}.
\]
Hence the group $B(3,\theta,\varepsilon)$ in Theorem~\ref{mainThm}(c)
is a Suzuki $2$-group as defined by Higman. It was proved
in~\cite[Remark, p.~706]{Dornhoff} that his group $B(n)$
in Theorem~1.1(v) is isomorphic to Higman's group $B(n,1,\mu+\mu^{-1})$ (note that, as required
by Higman, $\mu+\mu^{-1}$ is not of the form $\rho+\rho^{-1}$ for any $\rho\in\IF_{2^n}$),
and in~\cite[Lemma~4.6]{LZ} that $P(\varepsilon)\cong P(\varepsilon')$ for all primitive elements
$\varepsilon,\varepsilon'\in\IF_{2^6}$.  In order to complete the
proof of Theorem~\ref{mainThm}, we need to argue that the group
$P(\varepsilon)$ defined by Dornhoff~\cite[Theorem~(viii)]{Dornhoff}
is isomorphic to each of the Suzuki $2$-groups $B(3,\theta,\varepsilon)$ of
Theorem~\ref{mainThm}(c): all of these groups have order $2^9$. We also must
show that $P(\varepsilon)\not\cong B(3)=B(3,1,\mu+\mu^{-1})$. These calculations
can be performed by a computer. However, the latter calculation is very
delicate: the command {\tt IsIsomorphic} in {\sc Magma} crashed our computer,
also $P(\varepsilon)$ and $B(3)$ share a very large number of
isomorphism invariants, but we did find that they had a different number of
subgroups isomorphic to $(C_4)^3$: either zero or nine.

Given the great interest in proving isomorphism and non-isomorphism in
finite $p$-groups, we shall give non-computational proofs, first
that $P(\varepsilon)\cong B(3,\theta,\varepsilon)$ and second
that $P(\varepsilon)\not\cong B(3)$. Let $\omega\in\IF_{2^6}$ be any root of the
irreducible polynomial $X^6+X^4+X^3+X+1$, and note
that $\IF_{2^6}^\times=\langle\omega\rangle$.

In order to proceed, let us describe the unique group of the form $P(\varepsilon)$ in terms of a squaring. We note that in $P(\omega)$, one has
\[
(\alpha,0)\cdot(\alpha,0)=(0,\omega\alpha^3+\omega^8\alpha^{24}),
\]
whence the function $\sigma\colon\IF_{2^6}\rightarrow\IF_{2^3},\alpha\mapsto\omega\alpha^3+\omega^8\alpha^{24}$, may be viewed as a squaring of $P(\omega)$. Note that for this, we identify the fields $\IF_{2^6}$ and $\IF_{2^3}$ with the $\IF_2$-vector spaces $\IF_2^6$ and $\IF_2^3$ by fixing $\IF_2$-bases in the two fields.

Continuing to work with those two field structures, we can also provide an explicit equivariance witness for $\sigma$, which requires some notation. Note that $\GammaL_1(2^6)=\Gal(\IF_{2^6}/\IF_2)\ltimes\IF_{2^6}^\times$ and $\GammaL_1(2^3)=\Gal(\IF_{2^3}/\IF_2)\ltimes\IF_{2^3}^\times$. We write $\Gal(\IF_{2^6}/\IF_2)=\langle\alpha\rangle$ and $\Gal(\IF_{2^3}/\IF_2)=\langle\alpha^2\rangle$. Moreover, denoting by $\hat{\omega}$ the multiplication by $\omega$ on $\IF_{2^6}$, the subgroup $\IF_{2^6}^\times$ of $\GammaL_1(2^6)$ is generated by $\hat{\omega}$. Viewing $\IF_{2^3}^\times$ as a subgroup of $\IF_{2^6}^\times$, the corresponding subgroup of $\GammaL_1(2^3)$ is generated by (the restriction of) $\hat{\omega}^9$.

Then the aforementioned equivariance witness for $\sigma$ is the epimorphism
\[
\varphi\colon\langle\alpha^2\hat{\omega},\hat{\omega}^3\rangle\rightarrow\GammaL_1(2^3),\ \ \alpha^2\hat{\omega}\mapsto\alpha^4,\hat{\omega}^3\mapsto\hat{\omega}^9.
\]
The reader may like to check that $\varphi$ is indeed an equivariance witness for $\sigma$.

It is also not hard to check that the group $P(\omega)$ is nonabelian. According to the above equivariance witness, the restriction of the automorphism group of $P(\omega)$ to $\Phi(P(\omega))$ contains a copy of $\GammaL_1(2^3)$. In particular, it contains a Singer cycle, whence $P(\omega)$ is a (proper) Suzuki $2$-group. We now aim to prove that it has any of the \enquote{Higman notations} that are indicated in part (3) of our Theorem~\ref{mainThm}.

First, note that there are no $A$-type and $D$-type Suzuki $2$-groups of order $2^9$. Moreover, for any fixed automorphism $\xi$ of $P(\omega)$ that induces the order $7$ automorphism $\hat{\omega}^9\in\GammaL_1(2^6)$ on $P(\omega)/\Phi(P(\omega))$, the group $P(\omega)$ has exactly $9$ normal subgroups of order $2^6$ that are $\xi$-invariant, and for any concrete choice of $\xi$, one can easily check with GAP that these subgroups are all nonabelian. Hence by reversing the argument in \cite[proof of Lemma 12, pp.~90--92]{Hig63a}, we find that $P(\omega)$ also cannot be a $C$-type Suzuki $2$-group, whence it is a $B$-type Suzuki $2$-group.

Our next goal is to argue that $P(\omega)\not\cong B(3,\id,\varepsilon)$ for any $\varepsilon\in\IF_{2^3}^\times$ not of the form $\rho^{-1}+\rho$ for some $\rho\in\IF_{2^3}^\times$. For this, we need some preparations.

We exhibit a common datum for the groups $B(3,\id,\varepsilon)$, which shows in particular that they are pairwise isomorphic. The squaring for this datum is the function $\sigma'\colon\IF_{2^6}\rightarrow\IF_{2^3},x\mapsto x^9$, and its equivariance witness is the epimorphism $\varphi'\colon\langle\hat{\omega}\rangle\rightarrow\langle\hat{\omega}^9\rangle, \hat{\omega}\mapsto\hat{\omega}^9$. This squaring represents some $3$-orbit $2$-group with elements of the form $(\gamma,\zeta)$ where $\gamma\in\IF_{2^6}$ and $\zeta\in\IF_{2^3}$. On the other hand, Higman writes the groups of the form $B(3,\id,\varepsilon)$ such that the elements are triples $(\alpha,\beta,\zeta)$ where $\alpha,\beta,\zeta\in\IF_{2^3}$. Let $P(X)=X^2+\varepsilon X+1\in\IF_{2^3}[X]$ and identify $\IF_{2^6}$ with $\IF_{2^3}[X]/(P(X))$, so that the elements of $\IF_{2^6}$ have the form $\xi_0+\xi_1X+(P(X))$ with $\xi_0,\xi_1\in\IF_{2^3}$. The squaring $\sigma$ is the $(2^3+1)^{\rm th}$ power function $\IF_{2^6}\rightarrow\IF_{2^3}$. Computing modulo $P(X)$, gives
\begin{align*}
(\xi_0+\xi_1X)^{1+2^3}&\equiv(\xi_0+\xi_1X)\cdot(\xi_0+\xi_1X)^{2^3}
\equiv(\xi_0+\xi_1X)\cdot(\xi_0+\xi_1X^{2^3})\\
&\equiv(\xi_0+\xi_1X)\cdot(\xi_0+\xi_1(\varepsilon+X))=
(\xi_0+\xi_1X)\cdot(\xi_0+\xi_1\varepsilon+\xi_1X)\\
&\equiv\xi_0^2+\varepsilon\xi_0\xi_1+X\cdot(\xi_0\xi_1+\xi_1\xi_0+\xi_1^2\varepsilon)+X^2\cdot\xi_1^2\\
&\equiv \xi_0^2+\varepsilon\xi_0\xi_1+X\cdot\xi_1^2\varepsilon+(1+\varepsilon X)\cdot\xi_1^2\equiv \xi_0^2+\varepsilon\xi_0\xi_1+\xi_1^2.
\end{align*}
This is the same as the squaring function in $B(3,\id,\varepsilon)$, and the only constraint on $\varepsilon$ here is that the polynomial $X^2+\varepsilon X+1$ is irreducible over $\IF_{2^3}$, which is equivalent to Higman's condition that $\varepsilon\not=\rho^{-1}+\rho$ for all $\rho\in\IF_{2^3}$. This concludes the proof of our claim that all Suzuki $2$-groups of the form $B(3,\id,\varepsilon)$ are isomorphic to the $3$-orbit $2$-group represented by the above datum $(\sigma',\varphi')$.

Now we can argue that $P(\omega)$ is \emph{not} represented by this datum, whence it does not appear in Theorem~\ref{mainThm}(b). Assume otherwise. Then it is possible to extend the given equivariance witness $\varphi\colon A\rightarrow\GammaL_1(2^3)$ for the squaring $\sigma$ representing $P(\omega)$ such that the extended domain $A^+$ of the extended equivariance witness $\varphi^+$ contains an $\IF_2$-linear vector space automorphism of $\IF_{2^6}$ of order $2^6-1$. Since $A^+$ must be contained in some conjugate of $\GammaL_1(2^6)$, it follows that the order $21$ element $\hat{\omega}^3\in A\leq A^+$ must be a third power of a generator $\gamma$ of the unique cyclic subgroup of $A^+$ of order $63$. In particular, $\gamma$ lies in the centralizer of $\hat{\omega}^3$ in $\GL_6(2)$, which by \cite[Lemma 5.3]{AW92a} implies that $\langle\gamma\rangle=\langle\hat{\omega}\rangle$. Thus, $\hat{\omega}\in A^+$. Moreover, because the image of $\varphi^+$ must be contained in some conjugate of $\GammaL_1(2^3)$ while also containing $\GammaL_1(2^3)$, it follows that $\im(\varphi^+)=\GammaL_1(2^3)$. In particular, since $\varphi^+(\hat{\omega})^3=\varphi^+(\hat{\omega}^3)=\varphi(\hat{\omega}^3)=\hat{\omega}^9$, the image $\varphi^+(\hat{\omega})$ must be an element of $\GammaL_1(2^3)$ of order $7$. In particular, $\varphi^+(\hat{\omega})=\hat{\omega}^c$ for some $c\in\IZ$ with $\gcd(c,63)=9$. Hence, for all $x\in\IF_{2^6}$, one has
\[
\omega^{1+c}x^3+\omega^{8+c}x^{24}=\sigma(x)^{\varphi^+(\hat{\omega})}=\sigma(x^{\hat{\omega}})=\omega^4x^3+\omega^{32}x^{24},
\]
which is impossible -- the desired contradiction.

In summary, we find that $P(\omega)$ must be a Suzuki $2$-group of the form $B(3,\theta,\varepsilon)$ for a \emph{nontrivial} field automorphism $\theta$ of $\IF_{2^3}$ and some
\[
\varepsilon\in\IF_{2^3}\setminus\{\rho^{-1}+\rho^{\theta}\mid\rho\in\IF_{2^3}^\times\}=\{\omega^{9\cdot 2^i}\mid i=0,1,2\}=\{\eta^{2^i}\mid i=0,1,2\}.
\]
For the first set equality the choice of minimal polynomial for $\omega$ above matters.

We now show that all combinations of $\theta\not=\id$ and (admissible) $\varepsilon$ are possible (i.e., the corresponding groups $B(3,\theta,\varepsilon)$ are pairwise isomorphic, whence they must all be isomorphic to $P(\omega)$). Indeed, since, as observed by Higman \cite[paragraph after Theorem 1, p.~82]{Hig63a}, $B(3,\theta,\varepsilon)\cong B(3,\theta^{-1},\varepsilon)$, it suffices to show this when $\theta$ is the squaring automorphism $\chi\mapsto\chi^2$ of $\IF_{2^3}$.
Each $\zeta\in\IF_{2^6}\setminus\IF_{2^3}$ gives rise to an additive group isomorphism $\iota_{\zeta}\colon\IF_{2^3}^2\rightarrow\IF_{2^6}$, depending on the $\zeta$, with
$(\chi_1,\chi_2)\mapsto\chi_1+\chi_2\cdot\zeta$.
The following is easy to check with GAP \cite{GAP4}:
\begin{enumerate}
\item With $\zeta=\omega^{13}$, $\sigma_{\omega}\circ\iota_{\zeta}\colon\IF_{2^3}^2\rightarrow\IF_{2^6}$ is an $\IF_{2^3}$-scalar multiple of $(\chi_1,\chi_2)\mapsto\chi_1^3+\omega^9\chi_1^2\chi_2+\chi_2^3$, which is, up to a component swap, the \enquote{squaring function} of $B(3,\theta,\omega)$ specified by Higman. This shows that $G\cong B(3,\theta,\omega^9)$.
\item Similarly, with $\zeta=\omega^{44}$, $\sigma_{\omega}\circ\iota_{\zeta}$ becomes an $\IF_{2^3}$-scalar multiple of the \enquote{squaring function} of $B(3,\theta,\omega^{18})$, proving that $G\cong B(3,\theta,\omega^{18})$.
\item Finally, with $\zeta=\omega^{25}$, $\sigma_{\omega^2}\circ\iota_{\zeta}$ turns into an $\IF_{2^3}$-scalar multiple of the \enquote{squaring function} of $B(3,\theta,\omega^{36})$, which shows that $G\cong B(3,\theta,\omega^{36})$.
\end{enumerate}

\section{Appendix}\label{app}

\subsection{Proof of Proposition~\ref{4presProp}}\label{app1}

For statement (1): The power-commutator presentation $\P$ is consistent
by~\cite[Theorem~9.22]{HEO05a} and the remark at the bottom of page 362.
Consistency means that there all the relations are consequences of those
in~$\P$, so that elements of the group $G_{\P}$ represented by $\P$
have a normal form
$x_1^{\delta_1}\cdots x_m^{\delta_m}y_1^{\eps_1}\cdots y_m^{\eps_n}$
with $\delta_i,\eps_j\in\{0,1\}$.

For statement (2): That $\sigma_{\P}$ and $[\,,]_{\P}$ are well-defined through their implicit definitions is easy to see by computing in the above model of $G_{\P}$, where $w_1(u)_{G_{\P}}=(u,0)$ and $w_2(v)_{G_{\P}}=(0,v)$ for all $u\in\IF_2^m$ and all $v\in\IF_2^n$. Moreover, using that $G_{\P}$ is nilpotent of class at most $2$ with $G'=\Z(G)$ of exponent $2$, one sees that $\sigma_{\P}(u)$ is the unique value of the squaring function $x\mapsto x^2$ of $G_{\P}$ on the coset $\{u\}\times\IF_2^n$, and $[u_1,u_2]_{\P}$ is the unique value of the commutator function $(x,y)\mapsto [x,y]=x^{-1}y^{-1}xy$ of $G_{\P}$ on the coset product $(\{u_1\}\times\IF_2^n)\times(\{u_2\}\times\IF_2^n)$. This also entails that $[\,,]_{\P}$ is biadditive and alternating, and the identity $[u_1,u_2]_{\P}=\sigma(u_1+u_2)+\sigma(u_1)+\sigma(u_2)$ follows from the fact that $G_{\P}$ satisfies the identity $[x,y]=(xy)^2x^2y^2$.

For statement (3): Denoting by $e_i$, for $i=1,\ldots,m$, the $i$-th
standard basis vector of $\IF_2^m$, let $\P'$ be the consistent
power-commutator presentation of the form specified in statement (1)
with respect to the choices $\sigma_i\coloneq\sigma(e_i)$, for
$i=1,\ldots,m$, and
$\pi_{i,j}\coloneq\sigma(e_i+e_j)+\sigma(e_i)+\sigma(e_j)$, for $1\leq
i<j\leq m$. Then $[\,,]_{\P'}=[\,,]$, as these two functions agree on the
arguments $(e_i,e_j)$ for $1\leq i<j\leq m$ and they are both
alternating and biadditive. We claim that also $\sigma_{\P'}=\sigma$;
indeed, we will show by induction on the number of summands $e_i$
needed to express $u\in\IF_2^m$ that $\sigma_{\P}(u)=\sigma(u)$:
First, note that by definition of $\sigma_{\P'}$, $\sigma_{\P'}(0)=0$,
whereas $\sigma(0)=0$ holds since
$\sigma(0)=\sigma(0+0)=\sigma(0)+\sigma(0)+[0,0]$ and $[\,,]$ is
biadditive. Moreover, by construction, $\sigma$ and $\sigma_{\P'}$
agree on the elements of the standard basis $\{e_i\mid i=1,\ldots,m\}$.
The induction step, use that for all $u_1,u_2\in\IF_2^m$ with
$\sigma_{\P}(u_i)=\sigma(u_i)$ for $i=1,2$,~one~has
\[
\sigma_{\P}(u_1+u_2)=\sigma_{\P}(u_1)+\sigma_{\P}(u_2)+[u_1,u_2]_{\P}=\sigma(u_1)+\sigma(u_2)+[u_1,u_2]=\sigma(u_1+u_2).
\]

For statement (4): It is immediate to check that the specified map respects the defining relations of $\P$ and thus extends to an \emph{endo}morphism $\chi$ of $G_{\P}$. Moreover, by definition, $\chi$ maps each coset of $\langle y_1,\ldots,y_n\rangle_{G_{\P}}$ to itself, so $\ker(\chi)\leq\langle y_1,\ldots,y_n\rangle_{G_{\P}}$. But also by definition, $\chi$ acts as the identity on $\langle y_1,\ldots,y_n\rangle_{G_{\P}}$, and so $\ker(\chi)$ is trivial, i.e., $\chi$ is an automorphism of $G_{\P}$, which is central since $\langle y_1,\ldots,y_n\rangle_{G_{\P}}\leq\Z(G_{\P})$. It is easy to check that these automorphisms $\chi$ form a subgroup of $\Aut(G_{\P})$.

For statement (5): That $C_{\P}$ is the kernel of $\gamma$ follows from the fact that $C_{\P}$ consists just of those automorphisms of $G_{\P}$ that map each coset of $\langle y_1,\ldots,y_n\rangle_{G_{\P}}$ to itself (the surjectivity of $[\,,]_{\P}$ implies that all such automorphisms must fix $\langle y_1,\ldots,y_n\rangle_{G_{\P}}=G_{\P}'$ element-wise). By definition, $\gamma_1(\alpha)$ is the matrix from $\GL_m(2)$ that encodes the (linear) induced action of $\alpha$ on $G_{\P}/G_{\P}'\cong\IF_2^m$ with the respect to the $\IF_2$-basis $(x_1)_{G_{\P}}G_{\P}',\ldots,(x_m)_{G_{\P}}G_{\P}'$, and $\gamma_2(\alpha)$ is the matrix from $\GL_n(2)$ that encodes the (linear) restricted action of $\alpha$ on $G_{\P}'\cong\IF_2^n$ with respect to the $\IF_2$-basis $(y_1)_{G_{\P}},\ldots,(y_n)_{G_{\P}}$. Hence $\sigma_{\P}\circ\gamma_1(\alpha)=\gamma_2(\alpha)\circ\sigma_{\P}$ follows from the facts that $\sigma_{\P}$ encodes (with respect to the same bases) the squaring function of $G_{\P}$ and that $\alpha$ commutes with that squaring function. Conversely, given a matrix pair $(\gamma_1,\gamma_2)\in\GL_m(2)\times\GL_n(2)$ such that $\sigma_{\P}\circ\gamma_1=\gamma_2\circ\sigma_{\P}$, the function on the model $\IF_2^m\times\IF_2^n$ of $G_{\P}$ from the proof of statement (1) defined via $(u,v)\mapsto(u^{\gamma_1},v^{\gamma_2})$ is an automorphism $\alpha$ of $G_{\P}$ (it acts as an automorphism on $G_{\P}'$, commutes with the squaring function of $G_{\P}$ by definition and thus, using that $G_{\P}$ satisfies the identity $[x,y]=(xy)^2x^2y^2$, also commutes with the commutator function of $G_{\P}$, whence it respects the defining relations of $\P$) and satisfies $\gamma_i(\alpha)=\gamma_i$ for $i=1,2$ by definition.

For statement (6): Since $S_{\P}$ is a subgroup of $A_{\P}\times B_{\P}$, it suffices to show that if $(1,b)\in S_{\P}$, then $b=1$. This is equivalent to the assertion that if an automorphism $\alpha$ of $G_{\P}$ acts trivially modulo $G_{\P}'$, then the restriction of $\alpha$ to $G_{\P}'$ is also trivial, which is clear since $G_{\P}'=\Z(G_{\P})$. That the thus well-defined function $\varphi_{\P}$ is a homomorphism $A_{\P}\rightarrow B_{\P}$ follows from the fact that $S_{\P}$ is a subgroup of~$A_{\P}\times B_{\P}$.

\subsection{Graph automorphisms and point stabilizers in \texorpdfstring{$\Sp_4(\kern-0.7pt2^b\kern-0.7pt)$}{Sp4(2b)}}\label{app2}

In the sequel, we view $\Sp_4(2^b)$ as the set $\{X\in\GL_4(2^b)\mid X^{\tr}JX=J\}$ where
\[
J=\scalemath{0.8}{\begin{pmatrix}0 & 0 & 0 & 1 \\ 0 & 0 & 1 & 0 \\ 0 & 1 & 0 & 0 \\ 1 & 0 & 0 & 0\end{pmatrix}}.
\]
Moreover, we view the semilinear symplectic group $\GammaSp_4(2^b)$ as
an index $2$ subgroup of $\Aut(\Sp_4(2^b))$ acting naturally on
$\Sp_4(2^b)$. The following
result is used in~Section~\ref{sec3}:

\begin{proposition*}\label{graphProp}
Let $b\in\IN^+$, and let $\alpha\in\Aut(\Sp_4(2^b))\setminus\GammaSp_4(2^b)$ (i.e., $\alpha$ involves some non-trivial graph automorphism of $\Sp_4(2^b)$). Then for each $v\in\IF_{2^b}^4\setminus\{0\}$, the elements of $\Stab_{\Sp_4(2^b)}(v)^{\alpha}$ do not have a common nonzero fixed point in $\IF_{2^b}^4$.
\end{proposition*}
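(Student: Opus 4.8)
The plan is to reduce the general statement to a single explicit verification, and then to carry out that verification by a short matrix computation. Throughout I fix the standard basis $e_1,\ldots,e_4$ of $\IF_{2^b}^4$ and view $\Sp_4(2^b)$ as in the set-up, i.e.\ as $\{U:U^{\tr}JU=J\}$. Two invariance principles drive the reduction. First, $\Sp_4(2^b)$ acts transitively on $\IF_{2^b}^4\setminus\{0\}$ (Witt's lemma, as already used via Hering's theorem), so for any nonzero $v$ there is $g\in\Sp_4(2^b)$ with $g e_1=v$ and hence $\Stab_{\Sp_4(2^b)}(v)=g\,\Stab_{\Sp_4(2^b)}(e_1)\,g^{-1}$. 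Second, for any set $S$ of linear maps and any invertible $h$, the common fixed space of $h S h^{-1}$ is the image under $h$ of the common fixed space of $S$; in particular one has a nonzero common fixed point iff the other does. Applying the automorphism $\alpha$ to the displayed conjugacy gives $\Stab_{\Sp_4(2^b)}(v)^{\alpha}=g^{\alpha}\,\Stab_{\Sp_4(2^b)}(e_1)^{\alpha}\,(g^{\alpha})^{-1}$ with $g^{\alpha}\in\Sp_4(2^b)$, so by the second principle it suffices to treat $v=e_1$.

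Next I would reduce the automorphism to the single map $\iota$. Since $\Aut(\Sp_4(2^b))=\GammaSp_4(2^b)\langle\iota\rangle$ with $\GammaSp_4(2^b)$ of index $2$, any $\alpha$ involving a nontrivial graph automorphism can be written $\alpha=\delta\iota$ with $\delta\in\GammaSp_4(2^b)$. Conjugation by the symplectic semilinear map $\delta$ carries a vector stabilizer to a vector stabilizer, namely $\Stab_{\Sp_4(2^b)}(e_1)^{\delta}=\Stab_{\Sp_4(2^b)}(w)$ for the nonzero vector $w$ that is the $\delta^{-1}$-image of $e_1$ (the fixed-vector condition transports cleanly even though $\delta$ is only semilinear). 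Hence $\Stab_{\Sp_4(2^b)}(e_1)^{\alpha}=\Stab_{\Sp_4(2^b)}(w)^{\iota}$, and applying the first reduction once more — now to the fixed automorphism $\iota$ and the vector $w$ — shows that everything comes down to the single assertion that $\Stab_{\Sp_4(2^b)}(e_1)^{\iota}$ has no nonzero common fixed point.

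To settle this I would write $\Stab_{\Sp_4(2^b)}(e_1)$ explicitly as the matrices $U\in\Sp_4(2^b)$ whose first column is $e_1$ (the vector stabilizer of the isotropic point $\langle e_1\rangle$, of order $2^{4b}(2^{2b}-1)$, an extension of a normal $2$-group of order $2^{3b}$ by a Levi complement $\SL_2(2^b)$), select a small explicit set of its elements — for instance one long-root transvection together with a generator of the Levi $\SL_2(2^b)$, and if needed one more element — apply Solazzi's explicit graph automorphism from \cite[p.~235]{Sol77a} to each, and then solve the homogeneous system $Mx=x$ for the images $M$ to exhibit that the only common solution is $x=0$. The conceptual reason this must succeed is that $\iota$ interchanges the two $\Sp_4(2^b)$-classes of maximal parabolics, so $\Stab_{\Sp_4(2^b)}(e_1)^{\iota}$ lands in the stabilizer of a totally isotropic plane $W$ and its Levi acts as $\SL(W)$ on the fixed-point-free module $W\oplus V/W$; the explicit computation simply makes this visible without having to pin the embedding down abstractly. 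The hard part will be transcribing the exceptional graph automorphism correctly: in characteristic $2$ it arises from the isogeny exchanging the long and short roots, so on matrix entries it is not a naive transpose-type map but mixes the root subgroups and introduces Frobenius twists $x\mapsto x^2$ asymmetrically, and one must follow Solazzi's formulas verbatim and confirm that the resulting $\iota$ genuinely lies outside $\GammaSp_4(2^b)$. Once the images $M$ are recorded correctly, checking that $M-\mathrm{Id}$ have trivial common kernel is routine finite linear algebra, uniform in $b$ since each entry is a fixed power of a single field generator, and this completes the argument.
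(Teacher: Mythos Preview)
Your reductions are correct and match the paper's: first use transitivity of $\Sp_4(2^b)$ on nonzero vectors to reduce to $v=e_1$, then use that $\GammaSp_4(2^b)$ has index $2$ in $\Aut(\Sp_4(2^b))$ and sends point stabilizers to point stabilizers to reduce to the single explicit Solazzi graph automorphism. The paper then carries out the computation slightly differently from what you propose: rather than picking a few elements of $\Stab_{\Sp_4(2^b)}(e_1)$ and checking their images, it writes down a full seven-parameter description of $\Stab_{\Sp_4(2^b)}(e_1)$ (parameters $a,b,c,d,e,f,g$ with $ad+bc=1$), applies Solazzi's formula to obtain the general element of the image, and then argues directly that no nonzero vector is fixed by all of them, using only that $\SL_2(2^b)$ acts transitively on $\IF_{2^b}^2\setminus\{0\}$. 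Either route works; yours is in principle more economical but requires choosing the ``small explicit set'' judiciously, while the paper's full parametrization makes the final step a two-line argument uniform in $b$.

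One small inaccuracy worth flagging: Solazzi's explicit formula (reproduced in the paper) has each entry of the image as a \emph{bilinear} expression in pairs of entries of the source matrix; there are no literal Frobenius twists $x\mapsto x^2$ applied to individual entries. The Frobenius shows up only upon squaring the graph automorphism. This does not affect your argument, since you correctly resolve to follow Solazzi's formula verbatim, but your parenthetical description of what to expect is slightly off.
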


\begin{proof}[Proof of the Proposition]
As images of point stabilizers in $\Sp_4(2^b)$ under automorphisms from $\GammaSp_4(2^b)$ are still point stabilizers, and since $\GammaSp_4(2^b)$ is of index $2$ in $\Aut(\Sp_4(2^b))$, it suffices to show that there exists such an $\alpha$ in $\Aut(\Sp_4(2^b))\setminus\GammaSp_4(2^b)$. Let us choose $\alpha$ to be the following graph automorphism of $\Sp_4(2^b)$ described in \cite[p.~235]{Sol77a}, whose construction is originally due to Wong:
\begin{align*}
&\alpha:\Sp_4(2^b)\rightarrow\Sp_4(2^b), \\
&\scalemath{0.8}{\begin{pmatrix}
\beta_{11} & \beta_{12} & \beta_{13} & \beta_{14} \\
\beta_{21} & \beta_{22} & \beta_{23} & \beta_{24} \\
\beta_{31} & \beta_{32} & \beta_{33} & \beta_{34} \\
\beta_{41} & \beta_{42} & \beta_{43} & \beta_{44}
\end{pmatrix}
\mapsto 
\begin{pmatrix}
\beta_{12}\beta_{21}+\beta_{11}\beta_{22} & \beta_{13}\beta_{21}+\beta_{11}\beta_{23} & \beta_{14}\beta_{22}+\beta_{12}\beta_{24} & \beta_{14}\beta_{23}+\beta_{13}\beta_{24} \\
\beta_{12}\beta_{31}+\beta_{11}\beta_{32} & \beta_{13}\beta_{31}+\beta_{11}\beta_{33} & \beta_{14}\beta_{32}+\beta_{12}\beta_{34} & \beta_{14}\beta_{23}+\beta_{13}\beta_{23} \\
\beta_{22}\beta_{41}+\beta_{21}\beta_{42} & \beta_{23}\beta_{41}+\beta_{21}\beta_{43} & \beta_{24}\beta_{42}+\beta_{22}\beta_{44} & \beta_{24}\beta_{43}+\beta_{23}\beta_{44} \\
\beta_{32}\beta_{41}+\beta_{31}\beta_{42} & \beta_{33}\beta_{41}+\beta_{31}\beta_{43} & \beta_{34}\beta_{42}+\beta_{32}\beta_{44} & \beta_{34}\beta_{43}+\beta_{33}\beta_{44}
\end{pmatrix}.}
\end{align*}
Since $\Sp_4(2^b)$ acts transitively on $\IF_{2^b}^4\setminus\{0\}$, we can also restrict $v$ to the concrete choice $v\coloneq e_1=(1,0,0,0)$, the first standard basis vector of $\IF_{2^b}^4$.

Let us first work out a parametrization for the elements of $S\coloneq\Stab_{\Sp_4(2^b)}(e_1)$. Any $g\in S$ satisfies $e_1^g=e_1$, $e_i^g\in e_1^{\perp}=\IF_{2^b}e_2+\IF_{2^b}e_3+\IF_{2^b}e_4$ for $i\in\{2,3\}$ and $e_4^g\in e_4+e_1^{\perp}$ (so the fourth coordinate of $e_4^g$ is $1$). Hence one can write each $g\in S$~as
\[
g=\scalemath{0.8}{\begin{pmatrix}1 & 0 & 0 & 0 \\ x_{21} & x_{22} & x_{23} & 0 \\ x_{31} & x_{32} & x_{33} & 0 \\ x_{41} & x_{42} & x_{43} & 1\end{pmatrix}}
\]
for suitable $x_{ij}\in\IF_{2^b}$, $i\in\{2,3,4\}$, $j\in\{1,2,3\}$. However, not all such choices of $x_{ij}$ result in an element of $S$; to get an actual parametrization of the elements of~$S$, we need to solve the matrix equation
\begin{equation}\label{matrixEq}
\scalemath{0.8}{\begin{pmatrix}1 & x_{21} & x_{31} & x_{41} \\ 0 & x_{22} & x_{32} & x_{42} \\ 0 & x_{23} & x_{33} & x_{43} \\ 0 & 0 & 0 & 1\end{pmatrix}J\begin{pmatrix}1 & 0 & 0 & 0 \\ x_{21} & x_{22} & x_{23} & 0 \\ x_{31} & x_{32} & x_{33} & 0 \\ x_{41} & x_{42} & x_{43} & 1\end{pmatrix}=J.}
\end{equation}

Multiplying the matrices on the left-hand side of Formula~(\ref{matrixEq}),
we get
\begin{align*}
&
\scalemath{0.8}{\begin{pmatrix}1 & x_{21} & x_{31} & x_{41} \\ 0 & x_{22} & x_{32} & x_{42} \\ 0 & x_{23} & x_{33} & x_{43} \\ 0 & 0 & 0 & 1\end{pmatrix}J\begin{pmatrix}1 & 0 & 0 & 0 \\ x_{21} & x_{22} & x_{23} & 0 \\ x_{31} & x_{32} & x_{33} & 0 \\ x_{41} & x_{42} & x_{43} & 1\end{pmatrix}}=
\scalemath{0.8}{\begin{pmatrix}x_{41} & x_{31} & x_{21} & 1 \\ x_{42} & x_{32} & x_{22} & 0 \\ x_{43} & x_{33} & x_{23} & 0 \\ 1 & 0 & 0 & 0\end{pmatrix}\begin{pmatrix}1 & 0 & 0 & 0 \\ x_{21} & x_{22} & x_{23} & 0 \\ x_{31} & x_{32} & x_{33} & 0 \\ x_{41} & x_{42} & x_{43} & 1\end{pmatrix}}\\
&=\scalemath{0.8}{\begin{pmatrix}0 & x_{42}+x_{22}x_{31}+x_{21}x_{32} & x_{43}+x_{23}x_{31}+x_{33}x_{21} & 1 \\ x_{42}+x_{21}x_{32}+x_{22}x_{31} & 0 & x_{23}x_{32}+x_{22}x_{33} & 0 \\ x_{43}+x_{21}x_{33}+x_{23}x_{31} & x_{22}x_{33}+x_{32}x_{23} & 0 & 0 \\ 1 & 0 & 0 & 0\end{pmatrix}}.
\end{align*}

By comparing coefficients with $J$, the right-hand side of Formula (\ref{matrixEq}), we arrive at the following system of equations (with redundant equations omitted):
\[
\scalemath{0.8}{\begin{cases}\text{I:} & x_{42}=x_{21}x_{32}+x_{22}x_{31} \\ \text{II:} & x_{43}=x_{21}x_{33}+x_{23}x_{31} \\ \text{III:} & x_{22}x_{33}+x_{32}x_{23}=1.\end{cases}}
\]
Hence, the general form of an element of $S=\Stab_{\Sp_4(2^b)}(e_1)$ is
\[
\scalemath{0.8}{\begin{pmatrix}
1 & e & f & g \\
0 & a & b & be+af \\
0 & c & d & cf+de \\
0 & 0 & 0 & 1
\end{pmatrix}}
\]
where $a,\dots,g\in\IF_{2^b}$ and $ad+bd=1$. Thus, the general form of an element of $S^{\alpha}$ is
\[
\scalemath{0.8}{\begin{pmatrix}
a & b & be^2+aef+ag & bef+af^2+bg \\
c & d & cef+de^2+cg & cf^2+def+dg \\
0 & 0 & a & b \\
0 & 0 & c & d
\end{pmatrix}\eqcolon M(a,b,c,d,e,f,g)}
\]
with $a,\dots,g\in\IF_{2^b}$ and $ad+bc=1$. We now argue that if a vector $x=(x_1,x_2,x_3,x_4)$ in $\IF_{2^b}^4$ is fixed by all such matrices $M(a,\ldots,g)$, then it must be the zero vector. Indeed, assume first that $(x_3,x_4)\not=(0,0)$. Since $\SL_2(2^b)$ acts transitively on $\IF_{2^b}^2\setminus\{0\}$, we can fix $a,b,c,d\in\IF_{2^b}$ with $ad+bc=1$ such that $(x_3,x_4)\left(\begin{smallmatrix}a & b \\ c & d\end{smallmatrix}\right)\not=(x_3,x_4)$, and then for any choice of $e,f,g\in\IF_{2^b}$, we have (considering the last two coordinates of both sides) $xM(a,\ldots,g)\not=x$, a contradiction. So we may assume that $(x_3,x_4)=(0,0)$. But then an analogous argument shows that necessarily $(x_1,x_2)=(0,0)$ as well, so that $x$ is indeed the zero vector, as asserted.
\end{proof}

\section*{Acknowledgement}
The authors thank the referee and the editor for their helpful suggestions.



\end{document}